\documentclass[10pt,reqno]{amsart}
\usepackage{amsmath,amscd}
\usepackage{url}

\theoremstyle{plain}
   \newtheorem{theorem}{Theorem}[section]
   \newtheorem{proposition}[theorem]{Proposition}
   \newtheorem{lemma}[theorem]{Lemma}
   
   \newtheorem{conjecture}[theorem]{Conjecture}
   \newtheorem{problem}[theorem]{Problem}
\theoremstyle{definition}

\newtheorem{example}[theorem]{Example} 

\theoremstyle{remark}
 \newtheorem{remark}[theorem]{Remark}

\author[P.~Br\"and\'en]{Petter Br\"and\'en}
\thanks{Supported by the G\"oran Gustafsson Foundation.}
\thanks{To appear in J. Reine Angew. Math. (Crelle's journal)} 
%\address{Department of Mathematics, Royal Institute of Technology,
%       SE-100 44 Stockholm, Sweden} 
%\email{pbranden@math.kth.se}       
       \address{Department of Mathematics, 
Stockholm University, 
SE-106 91 Stockholm, Sweden}
\email{pbranden@math.su.se}

\keywords{infinite log-concavity, distribution of zeros, real zeros, entire functions, Laguerre--P\'olya class, Tur\'an inequalities}
\subjclass[2000]{26C10, 30C15, 05A20}

%\numberwithin{equation}{section}
%\def\kk{\kern.4ex\mbox{\raise.4ex\hbox{{\rule{.35em}{.12ex}}}}\kern.4ex}
\def\kk{\kern.2ex\mbox{\raise.5ex\hbox{{\rule{.35em}{.12ex}}}}\kern.2ex}

\newcommand{\NN}{\mathbb{N}}
\newcommand{\zz}{\mathbf{z}}

\newcommand{\LP}{\mathcal{L{\kk}P}}
\newcommand{\PP}{\mathcal{P}^+}

\newcommand{\HH}{\mathcal{H}}
\newcommand{\LL}{\mathcal{L}}
\newcommand{\TT}{\mathcal{T}}

\newcommand{\A}{\mathcal{A}}

\newcommand{\RR}{\mathbb{R}}
\newcommand{\CC}{\mathbb{C}}

\renewcommand{\Re}{{\rm Re}}

\def\newop#1{\expandafter\def\csname #1\endcsname{\mathop{\rm
#1}\nolimits}}

\newop{diag}
\newop{supp}
\newop{per}
\newop{JP}
\newop{Sym}
\newop{St}
\newop{glb}
\newop{Trp}
\newop{Orb}

\dedicatory{Dedicated to the memory of Julius Borcea}

%--------------------------------------------------

\begin{document}
\title[Iterated sequences and the geometry of zeros]
{Iterated sequences and the geometry of zeros}
\begin{abstract}
We study the effect on the zeros of generating functions of sequences under certain non-linear transformations.  Characterizations of P\'olya--Schur type are given of the transformations that preserve the property of having only real and 
non-positive zeros. 
In particular, if a polynomial $a_0+a_1z +\cdots+a_nz^n$ has only real and non-positive zeros, 
then so does the polynomial 
$a_0^2+ (a_1^2-a_0a_2)z+\cdots+ (a_{n-1}^2-a_{n-2}a_n)z^{n-1}+a_n^2z^n$. 
This confirms a conjecture of Fisk, McNamara--Sagan and Stanley, respectively. A consequence is that if a polynomial has only real and non-positive zeros, then its Taylor coefficients form an infinitely log-concave sequence.  We extend the results to transcendental entire functions in the Laguerre--P\'olya class, and discuss the consequences to problems on iterated Tur\'an inequalities, studied by Craven and Csordas. Finally, we propose a new approach to a conjecture of Boros and Moll. 
\end{abstract}
\maketitle
%\begin{center}
%\emph{To appear in J. Reine Angew. Math. (Crelle's journal)}
%\end{center}
\tableofcontents

\section{Introduction}
Let $\Phi$ be a transformation of sequences of real numbers, and let  $\{a_k\}$ be a real sequence. We are interested in when the iterates $\Phi^i(\{a_k\})$, for $i\in \NN$, are non-negative. Such questions appear in the theory of entire functions \cite{CC1,CC2}, and recently in the theory of special functions \cite{BM,Fi,KP} and combinatorics \cite{MS,St}. It has been made evident that the zero set of the generating function of $\{a_k\}$ plays a prominent role in such questions. One purpose of this paper is to make this correspondence explicit. 

 Let $\{a_k\}=\{a_k\}_{k=0}^n$, where $n \in \NN\cup \{\infty\}$,  be a sequence of real numbers. The sequence is {\em log-concave} if $a_k^2-a_{k-1}a_{k+1} \geq 0$, for all $1 \leq k \leq n-1$. Define an operator on sequences by $\LL(\{a_k\}) = \{b_k\}_{k=0}^n$, where $b_k=a_k^2-a_{k-1}a_{k+1}$ for all $0 \leq k \leq n$, and  $a_{-1}=a_{n+1}=0$. Hence, $\{a_k\}$ is log-concave if and only if $\LL(\{a_k\})$ is non-negative. The sequence is $i$-\emph{fold log-concave} if the $i$th iterate,  $\LL^i(\{a_k\})$, is non-negative, and 
 {\em infinitely log-concave} if it is $i$-fold log-concave  for all $i \in \NN$. Boros and Moll \cite{BM} conjectured  
 that the sequence of binomial numbers, $\{ \binom n k \}_{k=0}^n$, is infinitely log-concave for each $n \in \NN$. If the polynomial 
 $
 \sum_{k=0}^n a_k z^k
 $
 has only real and non-positive zeros, then it follows that the sequence $\{a_k\}$ is log-concave. Motivated by this fact and Boros and Moll's conjecture on binomial numbers, Stanley \cite{St}, McNamara--Sagan \cite{MS} and Fisk \cite{Fi}, independently  made the following conjecture. 
\begin{conjecture}\label{conMSS}
Suppose that the polynomial $\sum_{k=0}^n a_k z^k$ has only real and negative zeros. Then so does the polynomial 
$$
\sum_{k=0}^n (a_k^2-a_{k-1}a_{k+1})z^k, \quad \mbox{ where } a_{-1}=a_{n+1}=0.
$$
In particular, the sequence $\{a_k\}_{k=0}^n$ is infinitely log-concave. 
\end{conjecture}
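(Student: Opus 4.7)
The idea is to recast the quadratic operation $\LL$ as a \emph{linear} map acting on the bivariate polynomial $p(z)p(w)$, and then analyse that linear map through a P\'olya--Schur/Borcea--Br\"and\'en style characterization of zero-preserving operators.

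First, I observe that $\LL(p)(z) = T\bigl(p(z)p(w)\bigr)$, where $T\colon\RR[z,w]\to\RR[z]$ is the linear operator defined on monomials by
\[
T(z^iw^j) \;=\; \begin{cases} z^{(i+j)/2} & \text{if } i=j,\\[2pt] -\tfrac{1}{2}\,z^{(i+j)/2} & \text{if } |i-j|=2,\\[2pt] 0 & \text{otherwise.}\end{cases}
\]
A direct coefficient check, matching $[z^k]\,T(p(z)p(w))$ against $a_k^2 - a_{k-1}a_{k+1}$, verifies the identity. The payoff is that when $p(z)=\prod_{i=1}^n(z+\alpha_i)$ with $\alpha_i\ge 0$, the input $p(z)p(w)=\prod_{i=1}^n(z+\alpha_i)(w+\alpha_i)$ is an extremely structured real stable polynomial in two variables: symmetric under $z\leftrightarrow w$, and factored into linear forms in one variable each.

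Next, the plan is to invoke a P\'olya--Schur type characterization of those linear maps $R\colon\RR[z,w]\to\RR[z]$ that send polynomials of this symmetric factored shape into $\PP$. Following the Borcea--Br\"and\'en philosophy one expects the criterion to reduce to checking that $R$ applied to a single ``symbol'' family, such as $(z+\xi)^n(w+\xi)^n$ for $\xi\ge 0$ and $n\in\NN$, lies in $\PP$ as a polynomial in $z$. For our specific $T$, this evaluation gives a polynomial whose coefficients are explicit sums of squares of binomial numbers, and I would verify that it lies in $\PP$ either by a direct Hermite--Biehler argument or by exhibiting an interlacing pair (and then passing to the limit to handle the boundary zero at the origin).

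The main obstacle is that one cannot simply appeal to the standard bivariate stability-preserver theorem for general inputs of the form $p(z)q(w)$: the naive bilinear polarization $T\bigl(p(z)q(w)\bigr)$ need not lie in $\PP$ when $p\ne q$. For instance, with $p = 1+z$ and $q = (1+z)^3$ one computes $T(pq) = 1 + \tfrac{3}{2}z - \tfrac{1}{2}z^2$, which has a positive root. Thus any successful argument must genuinely exploit the coincidence of the $z$- and $w$-root sets in $p(z)p(w)$, and I expect the bulk of the technical work to lie precisely in crafting (or invoking) a P\'olya--Schur theorem that respects this symmetric, shared-root input structure, rather than a generic bilinear one. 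Once Conjecture~\ref{conMSS} is proved, infinite log-concavity is immediate, since $\PP$ is then closed under $\LL$ and every member of $\PP$ has non-negative coefficients, so all iterates $\LL^i(\{a_k\})$ are non-negative.
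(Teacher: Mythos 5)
Your reformulation of $\LL$ as a linear operator $T$ applied to the ``diagonal'' input $p(z)p(w)$ is correct, and your final paragraph deducing infinite log-concavity from closure of $\PP\cup\{0\}$ under $\LL$ is fine. But the proposal has a genuine gap at its center, and you have in fact located it yourself: you write that you ``expect the bulk of the technical work to lie precisely in crafting (or invoking) a P\'olya--Schur theorem that respects this symmetric, shared-root input structure.'' No such theorem is invoked or crafted, and none is available off the shelf. The Borcea--Br\"and\'en characterizations apply to linear operators tested against a linear space (or against all stable polynomials), where checking a single symbol such as $(z+w)^n$ suffices; here the admissible inputs $\{p(z)p(w): p\in\PP_n\}$ form a non-linear subset, and your own computation with $p=1+z$, $q=(1+z)^3$ shows that $T$ is \emph{not} a stability preserver on products $p(z)q(w)$, let alone on all of $\RR[z,w]$. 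Consequently, verifying that $T\bigl((z+\xi)^n(w+\xi)^n\bigr)$ lies in $\PP$ (which amounts to the reality of the zeros of the Narayana polynomials) is a necessary condition, but nothing in the proposal shows it is sufficient. The step from the symbol back to a general $p=\prod_j(1+\rho_j z)$ with distinct $\rho_j$ is exactly what is missing.

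The paper closes precisely this gap with a symmetric function identity: writing $a_k=e_k(\rho_1,\dots,\rho_n)$, one has
\begin{equation*}
\sum_{k=0}^n \bigl(e_k(\zz)^2-e_{k-1}(\zz)e_{k+1}(\zz)\bigr)
= e_n(\zz)\sum_{k=0}^{\lfloor n/2\rfloor} C_k\, e_{n-2k}\!\left(\zz+\frac 1 \zz\right),
\end{equation*}
with $C_k$ the Catalan numbers. The right-hand side is \emph{multi-affine and symmetric} in the new variables $z_j+1/z_j$, so after the substitution $z_j=\rho_j\xi$ with $\Re(\xi)>0$ (so that $\Re(\rho_j\xi+1/(\rho_j\xi))>0$), the Grace--Walsh--Szeg\H{o} theorem collapses all $n$ variables to a single value $\eta$ in the right half-plane; this is the legitimate replacement for the ``symbol suffices'' principle you were hoping for, and it reduces the whole problem to showing that $\sum_k C_k\binom{n}{2k}z^k$ has only real negative zeros (done via the Narayana/Jacobi polynomial identity). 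Without this linearizing identity, or an equivalent device that genuinely exploits the shared-root structure, your plan does not constitute a proof.
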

It should be mentioned that similar questions were raised already in \cite{CC1,CC2}, see Section \ref{fin}.
In Section \ref{SGWS} we prove 
Conjecture \ref{conMSS}.  
However, we take a general approach and study a large class of transformations of sequences. Let $\alpha =\{\alpha_k\}_{k=0}^\infty$ be a fixed sequence of complex numbers and define two sequences   $\LL_\alpha^E ( \{a_k\}_{k=0}^n)=\{b_k(\alpha)\}_{k=0}^n$ and $\LL_\alpha^O ( \{a_k\}_{k=0}^n)=\{c_k(\alpha)\}_{k=0}^n$, where 
$$
b_k(\alpha)= \sum_{j=0}^\infty \alpha_j a_{k-j}a_{k+j} \quad \mbox{ and } \quad  c_k(\alpha)= \sum_{j=0}^\infty \alpha_j a_{k-j}a_{k+1+j}, 
$$
 and $a_j=0$ if $j \not \in \{0, \ldots, n\}$. In Theorems \ref{trans-U} and \ref{trans-V} we characterize the sequences $\alpha$, for which $\LL^E$ (or $\LL^O$) preserves the property of having generating polynomial with only real and non-positive zeros. The characterization is of \emph{P\'olya--Schur type}; that is, $\LL^E_\alpha$ (or $\LL^O_\alpha$) has the desired properties if and only if the generating function of $\LL_\alpha^E(\{1/k!\})$ (or $\LL_\alpha^O(\{1/k!\})$) is an entire function that can be approximated, uniformly on compact subsets of $\CC$, by polynomials with only negative zeros. Similar characterizations of classes of transformations were given in \cite{PS} and \cite{BB}.  The fundamental difficulty in our setting is that in \cite{BB,PS},  the transformations in question are linear, whereas the transformations that we consider are not.  This potential problem is overcome by a symmetric function identity (Theorem \ref{gen-id}) that linearizes the problem. 

In Section \ref{fin} we propose a new approach to the original conjecture (see Conjecture \ref{orig-con})  of Boros and Moll \cite{BM}. We state a conjecture that would imply $3$-fold log-concavity of the sequences in question.

\section{Symmetric function identities}\label{sym}
Let $\{e_k(\zz)\}_{k=0}^n$ denote the elementary symmetric functions in the variables $\zz=(z_1, \ldots, z_n)$, and set $e_k(\zz)=0$ for $k \notin \{0,\ldots,n\}$. If $\mu=\{\mu_k\}_{k=0}^\infty$ is a sequence of complex numbers, we define a symmetric function by 
$$
W_{\mu,n}(\zz)= \sum_{i\leq j}\mu_{j-i}e_i(\zz)e_j(\zz).
$$

\begin{theorem}\label{gen-id}
Let $\mu = \{\mu_k\}_{k=0}^\infty$ be a sequence of complex numbers, and let  
\begin{equation}\label{gammas}
\gamma_k = \sum_{j=0}^{\lfloor k/2\rfloor} \binom  k j \mu_{k-2j}, \quad \mbox{ for } k\in \NN.
\end{equation}
Then 
\begin{equation}\label{el-exp}
W_{\mu,n}(\zz)= e_n(\zz)\sum_{k=0}^n \gamma_k e_{n-k}\left(\zz+\frac 1 \zz\right), 
\end{equation}
where $1/\zz=(1/z_1, \ldots, 1/z_n)$. 

\end{theorem}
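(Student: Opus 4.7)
The plan is to recognise the identity as a coefficient-extraction statement about the generating polynomial $P(t)=\prod_{i=1}^n(1+z_it)=\sum_{k=0}^n e_k(\zz)t^k$. The key observation is that the sums $\sum_i e_i(\zz)e_{i+\ell}(\zz)$ that make up $W_{\mu,n}(\zz)$ are exactly the Laurent coefficients of $P(t)P(1/t)$, and that this latter product has a clean closed form in terms of $\zz+1/\zz$.

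First, I would compute
\[
P(t)P(1/t)=\prod_{i=1}^n(1+z_it)(1+z_i/t)=\prod_{i=1}^n z_i\cdot\prod_{i=1}^n\bigl((z_i+1/z_i)+(t+1/t)\bigr),
\]
which, by the defining generating function $\prod_i(y_i+s)=\sum_k e_{n-k}(\mathbf{y})s^k$ applied with $y_i=z_i+1/z_i$ and $s=t+1/t$, yields
\[
P(t)P(1/t)=e_n(\zz)\sum_{k=0}^n e_{n-k}(\zz+1/\zz)(t+1/t)^k.
\]
Next, I would expand $(t+1/t)^k=\sum_j\binom{k}{j}t^{k-2j}$ and extract the coefficient of $t^{-\ell}$ (for $\ell\geq 0$) on both sides: on the left one gets $\sum_{i=0}^{n-\ell}e_i(\zz)e_{i+\ell}(\zz)$, while on the right the constraint $k-2j=-\ell$ forces $k=\ell+2j$, and after using $\binom{\ell+2j}{\ell+j}=\binom{\ell+2j}{j}$ one obtains
\[
\sum_{i=0}^{n-\ell}e_i(\zz)e_{i+\ell}(\zz)=e_n(\zz)\sum_{j\geq 0}\binom{\ell+2j}{j}e_{n-\ell-2j}(\zz+1/\zz).
\]

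Finally, I would multiply both sides by $\mu_\ell$ and sum over $\ell\geq 0$. The left-hand side becomes $W_{\mu,n}(\zz)$ by definition (setting $j-i=\ell$), while on the right-hand side the substitution $k=\ell+2j$ (with $\ell=k-2j\geq 0$, $j\geq 0$) regroups the double sum into $e_n(\zz)\sum_k e_{n-k}(\zz+1/\zz)\sum_{j=0}^{\lfloor k/2\rfloor}\binom{k}{j}\mu_{k-2j}$, which by \eqref{gammas} is precisely $e_n(\zz)\sum_k\gamma_k e_{n-k}(\zz+1/\zz)$.

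The main obstacle is a bookkeeping one rather than a conceptual one: ensuring that the formal Laurent series manipulations are valid and that the index ranges match up correctly after the change of variables $k=\ell+2j$, in particular that the binomial identity $\binom{k}{(k+\ell)/2}=\binom{\ell+2j}{j}$ is used with the right parity restriction. Once the factorisation $P(t)P(1/t)=e_n(\zz)\prod_i((z_i+1/z_i)+(t+1/t))$ is in hand, which is the crux of the argument, everything else reduces to this combinatorial reshuffling.
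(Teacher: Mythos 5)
Your argument is correct, and it reaches the identity by a genuinely different route from the paper. The paper reduces by linearity to a single nonzero $\mu_m$ and then invokes the symmetric functions $\sigma_k^r(\zz)$ together with the product expansion $e_i(\zz)e_j(\zz)=\sum_r\binom{i-r+j-r}{i-r}\sigma_{i+j}^r(\zz)$ cited from Cardon--Rich and Nikolov--Uluchev, finishing with a counting argument that identifies $\sum_{|S|=m+2j}\zz^S\prod_{t\notin S}(1+z_t^2)$ with $e_n(\zz)e_{n-m-2j}(\zz+1/\zz)$. You instead obtain the same single-$\ell$ identity
\[
\sum_{i}e_i(\zz)e_{i+\ell}(\zz)=e_n(\zz)\sum_{j\geq 0}\binom{\ell+2j}{j}e_{n-\ell-2j}\left(\zz+\frac 1 \zz\right)
\]
by extracting the coefficient of $t^{-\ell}$ from the factorization $P(t)P(1/t)=e_n(\zz)\prod_i\bigl((z_i+1/z_i)+(t+1/t)\bigr)$, which is self-contained and replaces the external product formula for $e_ie_j$ by an elementary Laurent-polynomial computation; the final summation over $\ell$ with weights $\mu_\ell$ and the reindexing $k=\ell+2j$ recover \eqref{gammas} exactly as in the paper. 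The bookkeeping you flag (parity of $k-\ell$, the identity $\binom{\ell+2j}{\ell+j}=\binom{\ell+2j}{j}$, and the index ranges) all checks out, and the formal Laurent manipulations are unproblematic since both sides of \eqref{el-exp} are polynomial identities in $\zz$ (note $e_n(\zz)e_{n-k}(\zz+1/\zz)$ is a genuine polynomial), so it suffices to verify them for $z_i\neq 0$. Your approach buys a shorter, reference-free proof; the paper's buys contact with the $\sigma_k^r$ machinery and sets up the combinatorial (Young tableaux) interpretation it sketches afterwards for the Catalan case.
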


\begin{proof}
By linearity it is enough to prove the theorem for the case when there is a number $m \in \NN$ such that $\mu_m=1$, and $\mu_k=0$ for each $k\neq m$. 

Following \cite{CR,NU}, for $k,r, n \in \NN$, define a symmetric function $\sigma_k^r(\zz)$ by 
$$
\sigma_k^r(\zz) = \sum_{\alpha=(\alpha_1, \ldots, \alpha_n)} z_1^{\alpha_1}\cdots z_n^{\alpha_n}, 
$$
where the summation is over all $\alpha \in \{0,1,2\}^n$ such that $\alpha_1 + \cdots + \alpha_n = k$, and $|\{i : \alpha_i = 2\}|= r$. By a simple counting argument, see \cite{CR,NU},
\begin{equation}\label{prodform}
e_i(\zz)e_j(\zz)= \sum_{r}\binom {i-r +j-r}{i-r} \sigma_{i+j}^r(\zz), 
\end{equation} 
and thus 
\begin{equation}\label{prodform2}
e_{i}(\zz)e_{i+m}(\zz) = \sum_{j} \binom {2j+m} {j} \sigma_{m+2i}^{i-j}(\zz).
\end{equation}

From the definition of $\sigma_{m+2i}^{i-j}(\zz)$ we see that 
$$
\sigma_{m+2i}^{i-j}(\zz)= \sum_{|S|=m+2j}\zz^S e_{i-j}(z_t^2: t \not \in S), 
$$
where $\zz^S=\prod_{s \in S}z_s$. Summing over all $i$ in the equation above yields
$$
\sum_{|S|=m+2j}\zz^S \prod_{t \not \in S}(1+z_t^2)= {e_n(\zz)} e_{n-m-2j}\left(z_1+\frac 1 {z_1}, \ldots, z_n+\frac 1 {z_n}\right). 
$$
Equation \eqref{el-exp}, for our choice of $\mu$,  now follows from \eqref{prodform2} when summing over all $j$. 
\end{proof}

We pause here to sketch an alternative combinatorial proof of the important case of \eqref{el-exp} when 
$\mu=\{1,0,-1,0,0,\ldots\}$. For undefined symmetric function terminology we refer to \cite[Chapter 7]{St2}. For our particular choice of $\mu$, we want to prove the identity
\begin{equation}\label{beauty}
\sum_{k=0}^n (e_k(\zz)^2-e_{k-1}(\zz)e_{k+1}(\zz))= e_n(\zz)\sum_{k=0}^{\lfloor n/2 \rfloor}  C_k e_{n-2k}\left(\zz+\frac 1 \zz\right),
\end{equation}
where $C_k=\binom {2k} k /(k+1)$ is a \emph{Catalan number}, see \cite[Exercise 6.19]{St2}.  We may rewrite \eqref{beauty} as
\begin{equation}\label{cat-type}
\sum_{k=0}^n (e_k(\zz)^2-e_{k-1}(\zz)e_{k+1}(\zz))= \sum_{k=0}^{\lfloor n/2 \rfloor} C_k \sum_{|S|=2k}\zz^S\prod_{j \notin S}(1+z_j^2). 
\end{equation}
The polynomial $e_k(\zz)^2-e_{k-1}(\zz)e_{k+1}(\zz)$ is the Schur-function $s_{2^k}(\zz)$, where $2^k=(2,2,\ldots,2)$.  By the combinatorial definition of the Schur-function, the left hand side of \eqref{cat-type} is the generating polynomial of all semi-standard Young tableaux  with entries in $\{1, \ldots, n\}$, that are  of shape $2^k$ for some $k \in \NN$. Call this set $\A_n$. Given $T \in \A_n$, let $S$ be the set of entries which occur only ones in $T$. By deleting the remaining entries we obtain a standard Young tableau  of shape $2^{k}$, where $2k=|S|$. There are exactly $C_k$ standard Young tableaux of shape 
$2^k$ with set of entries $S$, see e.g. \cite[Exercise 6.19.ww]{St2}. The original semi-standard Young tableau is then determined by the set of duplicates. This explains the right hand side 
of \eqref{cat-type}.  

\section{Grace--Walsh--Szeg\H{o} type theorems and a proof of Conjecture \ref{conMSS}}\label{SGWS}
The Grace--Walsh--Szeg\H{o} Theorem  is undoubtably one of the most useful theorems governing the location of zeros of polynomials,  see \cite{RS}. A {\em circular region} is a proper subset of the complex plane that is bounded by either a circle or a straight line, and is either open or closed. A polynomial is {\em multi-affine} provided that each 
variable occurs at most to the first power.   
\begin{theorem}[Grace--Walsh--Szeg\H{o}]\label{GWS}
Let $f \in \CC[z_1, \ldots, z_n]$ be a multi-affine and symmetric polynomial, and let $K$ be a circular region. Assume that either $K$ is convex or that the degree of $f$ is $n$. For any $\zeta_1, \ldots, \zeta_n \in K$ there is a $\zeta \in K$ such that 
$
f(\zeta_1, \ldots, \zeta_n)= f(\zeta, \ldots, \zeta). 
$
\end{theorem}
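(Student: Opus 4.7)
The plan is to reduce to the zero version of the theorem, namely: if $f$ is multi-affine and symmetric with $f(\zeta_1,\ldots,\zeta_n)=0$ and all $\zeta_i \in K$, then $f(\zeta,\ldots,\zeta)=0$ for some $\zeta \in K$. Given this form, the stated theorem follows by applying it to $f-w$ with $w = f(\zeta_1,\ldots,\zeta_n)$, since $f-w$ is still multi-affine and symmetric. A M\"obius transformation then allows me to assume $K$ is one of three model regions (closed half-plane, closed disk, closed exterior of a disk); the class of multi-affine symmetric polynomials and the relevant degree are preserved after clearing denominators.

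For the zero version I would induct on the number of distinct values among $\zeta_1,\ldots,\zeta_n$. The key step is to fuse two distinct values $\zeta_1\neq\zeta_2$ into a single common value $\zeta\in K$ while preserving the vanishing of $f$. Freezing $z_3,\ldots,z_n$ at $\zeta_3,\ldots,\zeta_n$ and using symmetry in $z_1,z_2$ together with multi-affineness,
$$f(z_1,z_2,\zeta_3,\ldots,\zeta_n) \;=\; \alpha z_1 z_2 + \beta(z_1+z_2) + \delta,$$
and the hypothesis $f(\zeta_1,\ldots,\zeta_n)=0$ reads $\alpha \zeta_1\zeta_2 + \beta(\zeta_1+\zeta_2) + \delta = 0$. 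The quadratic $\alpha\zeta^2 + 2\beta\zeta + \delta$ has roots $\eta_1,\eta_2$, and Vieta's formulas yield (via a short calculation) the cross-ratio identity
$$\frac{(\eta_1-\zeta_1)(\eta_2-\zeta_2)}{(\eta_1-\zeta_2)(\eta_2-\zeta_1)} \;=\; -1,$$
so $\{\eta_1,\eta_2\}$ and $\{\zeta_1,\zeta_2\}$ are in harmonic position.

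The heart of the proof is then the geometric lemma: if two pairs of points are in harmonic position and one pair lies in a circular region $K$, then at least one point of the other pair also lies in $K$. I would prove this by further M\"obius-normalising $\eta_1 \mapsto 0, \eta_2 \mapsto \infty$, which forces $\zeta_2=-\zeta_1$; the question then reduces to verifying that any circular region containing both $\zeta_1$ and $-\zeta_1$ must contain $0$ or $\infty$, an elementary case check against the three model regions. The main obstacle, and the source of the convex-or-degree-$n$ hypothesis, is the degenerate case $\alpha=0$: the quadratic then has a unique finite root $(\zeta_1+\zeta_2)/2$ with the ``other'' root at infinity, and to continue the induction one must place the finite midpoint in $K$ (which is automatic when $K$ is convex), or else invoke the degree-$n$ assumption to rule out $\alpha=0$ at the critical inductive step (via a perturbation/limit argument exploiting that the coefficient of $e_n$ in $f$ is nonzero). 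Once the fusion step is justified, iterating the induction produces the common value $\zeta \in K$ and the theorem follows.
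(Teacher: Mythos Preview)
The paper does not actually prove Theorem~\ref{GWS}; it merely states the Grace--Walsh--Szeg\H{o} theorem and cites \cite{RS} for a proof. Hence there is no ``paper's own proof'' to compare against.

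Your sketch follows one of the standard classical routes (reduction to the apolar/harmonic-conjugate lemma and a two-point fusion induction), and the main line is correct. A couple of remarks on the places where your outline is thinnest:
\begin{itemize}
\item The cross-ratio computation is right: with $\eta_1+\eta_2=-2\beta/\alpha$ and $\eta_1\eta_2=\delta/\alpha$, the relation $\alpha\zeta_1\zeta_2+\beta(\zeta_1+\zeta_2)+\delta=0$ is exactly $2\zeta_1\zeta_2-(\eta_1+\eta_2)(\zeta_1+\zeta_2)+2\eta_1\eta_2=0$, which is the numerator-plus-denominator of the cross ratio, forcing it to equal $-1$.
\item Your handling of the degenerate case $\alpha=0$ in the non-convex situation is the weakest link. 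Saying ``a perturbation/limit argument exploiting that the coefficient of $e_n$ is nonzero'' is not quite enough: after freezing $z_3,\ldots,z_n$ the coefficient $\alpha$ can genuinely vanish even when $\deg f=n$. One clean way to close this is to first M\"obius-normalise the non-convex region $K$ to the closed exterior of a disk, so that $\infty\in K$; then the ``root at infinity'' produced when $\alpha=0$ already lies in $K$ and the fusion step goes through without needing the finite midpoint. Alternatively, you may perturb the frozen values $\zeta_3,\ldots,\zeta_n$ within $K$ to make $\alpha\neq 0$ and pass to a limit, but then you must argue that the limiting fused point stays in the \emph{closed} region (and handle the open case separately). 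Either way, this step deserves a sentence or two more than you gave it.
\item When you subtract the constant $w$ to reduce to the zero version, note that $f-w$ has the same coefficient of $z_1\cdots z_n$ as $f$, so the degree-$n$ hypothesis is preserved; you implicitly use this and it is worth stating.
\end{itemize}
With these points tightened, your argument is a complete proof along the lines one finds, e.g., in \cite{RS}.
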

We are now in a position to prove Conjecture \ref{conMSS}. 

\begin{proof}[Proof of Conjecture \ref{conMSS}]
Let $P(z)= \sum_{k=0}^n a_kz^k= \prod_{k=0}^n(1+\rho_k z)$, where $\rho_k >0$ for all $1 \leq k \leq n$, and let 
$$
Q(z)= \sum_{k=0}^n (a_k^2-a_{k-1}a_{k+1})z^k.
$$
Suppose that there is a $\zeta \in \CC$, with $\zeta \notin \{x \in \RR: x \leq 0\}$, for which $Q(\zeta)=0$. We may write $\zeta$ as 
$\zeta = \xi^2$, where $\Re(\xi)>0$. By \eqref{beauty}, 
$$
0=Q(\zeta)=  a_n\xi^n \sum_{k=0}^{\lfloor n/2 \rfloor} C_k e_{n-2k}\left(\rho_1\xi+\frac 1 {\rho_1\xi}, \ldots, \rho_n\xi+\frac 1  {\rho_n\xi}\right), 
$$
where $C_k= \binom {2k} k/(k+1)$. Since $\Re(\rho_j \xi + 1/(\rho_j \xi))>0$ for all $1\leq j \leq n$, the Grace--Walsh--Szeg\H{o} Theorem provides a $\eta \in \CC$, with $\Re(\eta)>0$, such that 
$$
0=\sum_{k=0}^{\lfloor n/2 \rfloor} C_k e_{n-2k}\left(\eta, \ldots, \eta\right)= \sum_{k=0}^{\lfloor n/2 \rfloor} C_k\binom n {2k} \eta^{n-2k}=: \eta^n p_n\left(\frac 1 {\eta^2}\right).  
$$
Since $\Re(\eta)>0$, we have $1/\eta^2 \in \CC \setminus \{ x \in \RR: x\leq  0\}$. Hence, the desired contradiction follows if we can prove that all the zeros of $p_n(z)$ are real and negative. This follows from the identity 
\begin{eqnarray*}
\sum_{k=0}^{\lfloor n/2 \rfloor} C_k \binom n {2k}z^{k}(1+z)^{n-2k}&=&
\sum_{k=0}^n \frac 1 {n+1} \binom {n+1} k \binom {n+1} {k+1} z^k\\ 
&=& \frac 1 {n+1} (1-z)^nP_n^{(1,1)}\left(\frac {1+z}{1-z} \right), 
\end{eqnarray*}
where $\{P_n^{(1,1)}(z)\}_n$ are \emph{Jacobi polynomials}, see \cite[p.~254]{Ra}.  The zeros of the Jacobi polynomials $\{P_n^{(1,1)}(z)\}_n$ are located in the interval $(-1,1)$. Note that the first identity in the equation above follows 
immediately from \eqref{beauty}.
\end{proof}

Now that Conjecture \ref{conMSS} is established we shall see how the ideas in the proof can be extended considerably. 

 If $\mu$ is a sequence of complex numbers, define a (non-linear) operator, $T_\mu : \CC[z] \rightarrow \CC[z]$, by 
\begin{equation}\label{tmu}
T_\mu\left(\sum_{k=0}^na_kz^k\right)= \sum_{i \leq j} \mu_{j-i}a_ia_jz^{i+j}. 
\end{equation}

Define polynomials, $P_{\mu,n}(z)$, for $n \in \NN$, by 
$$
P_{\mu,n}(z)= \sum_{k=0}^n \gamma_{k}\binom n kz^{n-k}= \sum_{j,k}\binom n k \binom  k j \mu_{k-2j} z^{n-k}.
$$

A complex polynomial $F(z_1,\ldots, z_n)$ is \emph{weakly Hurwitz stable} if 
$F(z_1,\ldots, z_n) \neq 0$ whenever $\Re(z_j)>0$ for all $1\leq j\leq n$. The following theorem can be seen as a Grace--Walsh--Szeg\H{o} theorem for certain non-multi-affine polynomials. 
\begin{theorem}\label{GWS2}
Let $\mu$ be a sequence of complex numbers. The following are equivalent. 
\begin{itemize}
\item[(i)] $W_{\mu,n}(\zz)$ is weakly Hurwitz stable; 
\item[(ii)] For all polynomials $P(z)$ of degree at most $n$, with only real and non-positive zeros,  the polynomial 
$T_\mu(P(z))$ is either identically zero or weakly Hurwitz stable;
\item[(iii)] $T_\mu\left((1+z)^n\right)=W_{\mu,n}(z,\ldots, z)$  is weakly Hurwitz stable;
\item[(iv)] The polynomial $P_{\mu,n}(z)$ is weakly Hurwitz stable.
\end{itemize}
\end{theorem}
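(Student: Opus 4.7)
The plan is to prove the cycle (i) $\Rightarrow$ (ii) $\Rightarrow$ (iii) $\Leftrightarrow$ (iv) $\Rightarrow$ (i). For (ii) $\Rightarrow$ (iii), apply (ii) with $P(z) = (1+z)^n$: since the coefficients $\binom{n}{k}$ equal $e_k(1,\ldots,1)$, the definitions of $T_\mu$ and $W_{\mu,n}$ immediately give $T_\mu((1+z)^n) = W_{\mu,n}(z,\ldots,z)$. For (i) $\Rightarrow$ (ii), factor any $P$ of degree at most $n$ with real non-positive zeros as $P(z) = cz^j\prod_{k=1}^m(1+\rho_k z)$ with $\rho_k > 0$ and $j + m \le n$. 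A direct expansion yields $T_\mu(P)(z) = c^2 z^{2j} W_{\mu,m}(\rho_1 z, \ldots, \rho_m z)$. Weak Hurwitz stability of $W_{\mu,n}$ descends to $W_{\mu,m}$ (or else the latter vanishes identically) via the multivariate Hurwitz theorem applied to the limit $W_{\mu,n}(z_1,\ldots,z_m,\varepsilon,\ldots,\varepsilon) \to W_{\mu,m}(z_1,\ldots,z_m)$ as $\varepsilon \to 0^+$. Since $\Re z > 0$ and $\rho_k > 0$ yield $\Re(\rho_k z) > 0$, (ii) follows.

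For (iii) $\Leftrightarrow$ (iv), specialize Theorem \ref{gen-id} to $\zz = (z,\ldots,z)$. Using $e_n(z,\ldots,z) = z^n$ and $e_{n-k}(z+1/z,\ldots,z+1/z) = \binom{n}{n-k}(z+1/z)^{n-k}$, one obtains
$$W_{\mu,n}(z,\ldots,z) = z^n P_{\mu,n}\!\left(z + \frac{1}{z}\right).$$
The map $\phi(z) = z + 1/z$ sends the open right half plane onto itself: the forward inclusion is $\Re \phi(z) = \Re z \cdot (1 + |z|^{-2}) > 0$, and for surjectivity, given $w$ with $\Re w > 0$ the roots $\zeta_1, \zeta_2$ of $\zeta^2 - w\zeta + 1 = 0$ satisfy $\zeta_2 = 1/\zeta_1$, hence $\Re \zeta_1$ and $\Re \zeta_2 = \Re \zeta_1 / |\zeta_1|^2$ have the same sign; as they sum to $\Re w > 0$ both must be positive. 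Hence weak Hurwitz stability of $W_{\mu,n}(z,\ldots,z)$ is equivalent to that of $P_{\mu,n}$.

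The main implication is (iv) $\Rightarrow$ (i), where Grace--Walsh--Szeg\H{o} enters. Introduce the multi-affine symmetric polynomial
$$F(\mathbf{y}) = \sum_k \gamma_k\, e_{n-k}(y_1,\ldots,y_n),$$
so that Theorem \ref{gen-id} reads $W_{\mu,n}(\zz) = e_n(\zz)\,F(\zz + 1/\zz)$. If every $z_i$ satisfies $\Re z_i > 0$, then each $y_i := z_i + 1/z_i$ lies in the convex circular region $K = \{w : \Re w > 0\}$, so Theorem \ref{GWS} supplies $\eta \in K$ with
$$F(y_1,\ldots,y_n) = F(\eta,\ldots,\eta) = \sum_k \gamma_k \binom{n}{n-k}\eta^{n-k} = P_{\mu,n}(\eta).$$
By (iv), $P_{\mu,n}(\eta) \ne 0$, and $e_n(\zz) \ne 0$ since no $z_i$ vanishes; therefore $W_{\mu,n}(\zz) \ne 0$. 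The main obstacle---and the key observation---is recognizing that Theorem \ref{gen-id} factors $W_{\mu,n}$ as $e_n(\zz)$ times a multi-affine symmetric function evaluated at $\zz + 1/\zz$. It is precisely this factorization that lets Grace--Walsh--Szeg\H{o} collapse the multivariate stability of $W_{\mu,n}$ to the univariate stability of $P_{\mu,n}$, bypassing the fact that $W_{\mu,n}$ itself is far from multi-affine.
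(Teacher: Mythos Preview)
Your argument is correct and follows essentially the same route as the paper: the identity of Theorem~\ref{gen-id} together with the set equality $\{z+1/z:\Re z>0\}=\{w:\Re w>0\}$ gives (iii)~$\Leftrightarrow$~(iv), and Grace--Walsh--Szeg\H{o} applied to the multi-affine symmetric polynomial $F(\mathbf{y})=\sum_k\gamma_k e_{n-k}(\mathbf{y})$ gives (iv)~$\Rightarrow$~(i). The only difference is in (i)~$\Rightarrow$~(ii): the paper perturbs $P$ via Hurwitz' theorem so as to assume $P(z)=\prod_{j=1}^n(1+\rho_j z)$ with all $\rho_j>0$, whereas you keep the exact factorization $P(z)=cz^j\prod_{k=1}^m(1+\rho_k z)$ and instead pass stability from $W_{\mu,n}$ down to $W_{\mu,m}$ by letting the extra variables tend to $0^+$; both limit arguments are equally valid and amount to the same idea.
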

\begin{proof}
Suppose that $W_{\mu,n}(\zz)$ is weakly Hurwitz stable, and that $P(z)$ is a real polynomial of degree at most $n$ with only real and non-positive zeros. By Hurwitz' theorem on the continuity of zeros, see e.g. \cite[Theorem 1.3.8]{RS}, we may assume that 
$P(z)= \prod_{j=1}^n(1+\rho_jz)$, where $\rho_j>0$ for all $1\leq j \leq n$. Suppose that $\Re(\zeta)>0$. Then $\Re(\rho_j\zeta)>0$, for all $1\leq j \leq n$.  Hence 
$$
W_{\mu,n}(\rho_1\zeta, \ldots, \rho_n \zeta) = T_\mu(P(\zeta))\neq 0, 
$$
which proves (i) $\Rightarrow$ (ii). 

The implication (ii)  $\Rightarrow$ (iii) is obvious. Clearly,  by \eqref{el-exp}, 
$$
W_{\mu,n}(z,\ldots, z)=z^n P_{\mu,n}\left(z+\frac 1 z\right). 
$$
Hence, the equivalence of  (iii) and (iv) follows from the set identity 
$$\{ z+ 1/z : z \in \CC \mbox{ and } \Re(z)>0\} = \{z \in \CC: \Re(z)>0\}.$$
Now, suppose that $W_{\mu,n}(\zeta_1,\ldots, \zeta_n)=0$, where $\Re(\zeta_j)>0$ for all $1\leq j \leq n$.  Then, by \eqref{el-exp}, 
$$
\sum_{k=0}^n \gamma_k e_{n-k}\left(\zeta_1+ \frac 1 {\zeta_1}, \ldots,  \zeta_n+ \frac 1 {\zeta_n}\right)=0.
$$
Since $\Re(\zeta_j+ 1/\zeta_j)>0$, for all $1\leq j\leq n$, the Grace--Walsh--Szeg\H{o} theorem provides a number $\xi \in \CC$, with $\Re(\xi)>0$, such that 
$$
0= \sum_{k=0}^n \gamma_k e_{n-k}\left(\xi,\ldots,\xi\right)= P_{\mu,n}(\xi). 
$$
This verifies (iv) $\Rightarrow$ (i). 
\end{proof}

\section{Algebraic P\'olya--Schur characterizations of transformations} 
Let us turn to the cases when all the non-zero $\mu_i$'s have the same parity. Let $\alpha =\{\alpha_k\}_{k=0}^\infty$ be a fixed sequence of complex numbers and define two sequences   $\LL_\alpha^E ( \{a_k\}_{k=0}^n)=\{b_k(\alpha)\}_{k=0}^n$ and $\LL_\alpha^O ( \{a_k\}_{k=0}^n)=\{c_k(\alpha)\}_{k=0}^n$, where 
$$
b_k(\alpha)= \sum_{j=0}^\infty \alpha_j a_{k-j}a_{k+j} \quad \mbox{ and } \quad  c_k(\alpha)= \sum_{j=0}^\infty \alpha_j a_{k-j}a_{k+1+j}, 
$$
 and $a_j=0$ if $j \not \in \{0,1, \ldots, n\}$. Define also two  
\emph{non-linear} operators on polynomials, $U_\alpha, V_\alpha : \CC[z] \rightarrow \CC[z]$, by 
$$
U_\alpha \left( \sum_{k=0}^n a_kz^k\right) = \sum_{k=0}^n b_k(\alpha) z^k \quad \mbox{ and } \quad
V_\alpha \left( \sum_{k=0}^n a_kz^k\right) = \sum_{k=0}^n c_k(\alpha) z^k.
$$
We want to characterize the real sequences $\alpha$ for which $U_\alpha$ (or $V_\alpha$) send polynomials with only real and non-positive zeros to polynomials of the same kind. 

If $P(z)= \sum_{k=0}^na_k z^k$, let 
$$
P^E(z)= \sum_{k=0}^{\lfloor n/2\rfloor} a_{2k}z^k \quad \mbox{ and } \quad P^O(z)= \sum_{k=0}^{\lfloor (n-1)/2\rfloor} a_{2k+1}z^k. 
$$
The next theorem is a version of the classical Hermite--Biehler theorem, see e.g. \cite[p. 197]{RS}.  
\begin{theorem}[Hermite--Biehler]
Let $P(z)= P^E(z^2)+ zP^O(z^2) \in \RR[z]$.  Then $P(z)$ is weakly Hurwitz stable if and only if 
all non-zero coefficients of $P$ have the same sign, and 
\begin{itemize}
\item $P^E(z) \equiv 0$, and $P^O(z)$ has only real and non-positive zeros, or
\item $P^O(z) \equiv 0$, and $P^E(z)$ has only real and non-positive zeros, or
\item $P^E(z)P^O(z) \not \equiv 0$, and $P^E(z)$ and $P^O(z)$ have real and non-positive zeros which are interlacing in the following sense. If 
$z'_m\leq \cdots \leq z'_1$ and $z_\ell\leq \cdots \leq z_1$ are the zeros of $P^E(z)$ and $P^O(z)$, respectively, then 
$$
\cdots     \leq z_3\leq z'_2 \leq z_2\leq z'_1\leq z_1. 
$$
\end{itemize} 
\end{theorem}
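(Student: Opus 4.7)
The plan is to reduce the statement to the classical Hermite--Biehler theorem for the open upper half plane by means of the conformal map $w \mapsto -iw$, which sends the open upper half plane $\{w:\Im(w)>0\}$ onto the open right half plane $\{z:\Re(z)>0\}$. Under this change of variable,
$$
P(-iw) = P^E(-w^2) - iw\, P^O(-w^2) = A(w) - iB(w),
$$
where $A(w)=P^E(-w^2)$ and $B(w)=w\,P^O(-w^2)$ are real polynomials. Weak Hurwitz stability of $P$ is equivalent to the assertion that $P(-iw)$ has no zeros for $\Im(w)>0$, which by the classical Hermite--Biehler theorem (see \cite[p.~197]{RS}) is equivalent to $A$ and $B$ having only real and properly interlacing zeros together with a definite sign condition on the Wronskian $A'B - AB'$.

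I would then unwind what ``real interlacing zeros of $A$ and $B$'' means in terms of $P^E$ and $P^O$. Since $A(w)=P^E(-w^2)$ is even in $w$, its non-zero roots come in pairs $\pm w_0$, and the requirement that all roots of $A$ be real is equivalent to the requirement that every zero $-w_0^2$ of $P^E$ be real and non-positive. The factor $w$ in $B(w)=w\,P^O(-w^2)$ contributes the root $w=0$, and the same parity argument shows that all zeros of $P^O$ must be real and non-positive. The interlacing of the real zeros of $A$ and $B$ on the real $w$-axis translates, via $t \mapsto -t^2$, to the interlacing of the zeros of $P^E$ and $P^O$ displayed in the theorem; the mandatory root $w=0$ of $B$ is precisely what forces the pattern to ``start'' with $z_1' \leq z_1$ rather than the other way around.

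The two boundary cases, $P^E \equiv 0$ and $P^O \equiv 0$, follow directly from the substitution, since then $P(z) = z P^O(z^2)$ or $P(z) = P^E(z^2)$, and the image of the open right half plane under $z \mapsto z^2$ is $\CC \setminus (-\infty, 0]$; weak Hurwitz stability then reduces exactly to the claim that $P^E$ (respectively $P^O$) has only real and non-positive zeros. Finally, the sign condition on the coefficients of $P$ is immediate from the $\RR$-factorization: every real root of a weakly Hurwitz stable real polynomial is $\leq 0$, giving factors of the form $(z+a)$ with $a\geq 0$, and every irreducible quadratic factor is of the form $(z+a)^2 + b^2$ with $a\geq 0$; each such factor has non-negative coefficients, and the property of having all coefficients of a common sign is preserved under multiplication.

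The main obstacle I expect is bookkeeping with sign and normalization conventions: the classical Hermite--Biehler statement requires a specific sign for the Wronskian $A'B-AB'$ to guarantee upper half plane non-vanishing (rather than lower), and one must verify that this sign corresponds to the requested interlacing order $\cdots \leq z_3 \leq z_2' \leq z_2 \leq z_1' \leq z_1$ rather than the opposite. The degenerate configurations where $A$ or $B$ has a real zero (which, in the original variable, means $P$ has a zero on the imaginary axis, permitted since we are dealing with \emph{weak} Hurwitz stability) need to be treated by the standard Hurwitz continuity argument, perturbing $P$ to a strictly Hurwitz stable polynomial and passing to the limit.
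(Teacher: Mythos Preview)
The paper does not prove this theorem; it simply cites it as a classical result, referring to \cite[p.~197]{RS}. Your proposal to derive this version from the classical upper-half-plane Hermite--Biehler theorem via the substitution $z=-iw$ is exactly the standard route, and your outline is correct: the identification $P(-iw)=A(w)-iB(w)$ with $A(w)=P^E(-w^2)$, $B(w)=wP^O(-w^2)$ reduces the question to real interlacing of $A$ and $B$, and the two-to-one folding $w\mapsto -w^2$ transports that interlacing to the stated interlacing of the zeros of $P^E$ and $P^O$. The bookkeeping caveats you flag (the Wronskian sign fixing the orientation of the interlacing, and a Hurwitz-type limit argument to absorb zeros on the boundary for the \emph{weak} case) are genuine but routine. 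In short: there is nothing to compare against in the paper, and your argument would constitute a complete proof once the sign/orientation details are written out.
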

Given a sequence $\mu$, we define two auxiliary operators, $T_\mu^E, T_\mu^O : \CC[z] \rightarrow \CC[z]$, by 
$$
T_\mu^E(P(z))= T_\mu(P(z))^E \quad \mbox{ and } \quad T_\mu^O(P(z))= T_\mu(P(z))^O. 
$$

Let $\PP_n$ denote the set of all polynomials of degree at most $n$ with only real and non-positive zeros, and let $\PP=\bigcup_{n=0}^\infty \PP_n$.
\begin{theorem}\label{alg-U}
Let $\alpha=\{\alpha_k\}_{k=0}^\infty$ be a sequence of real numbers, and let $n\in \NN$. The following are equivalent. 
\begin{itemize}
\item[(i)] $U_\alpha(\PP_n) \subseteq \PP_n\cup\{0\}$;
\item[(ii)]  $U_\alpha\left((1+z)^n\right) \in \PP_n\cup\{0\}$;
\item[(iii)]  
$$
\sum_{k=0}^{\lfloor n/2 \rfloor} \left( \sum_{j=0}^k \frac {\alpha_j}{(k+j)!(k-j)!} \right)\frac{z^k}{(n-2k)!} \in \PP_n\cup\{0\}.
$$
\end{itemize}
\end{theorem}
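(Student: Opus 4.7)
The plan is to reduce Theorem \ref{alg-U} directly to Theorem \ref{GWS2} via the encoding $\mu_{2j}=\alpha_j$, $\mu_{2j+1}=0$ for $j\geq 0$. With this choice, a direct reindexing yields the key identity
\[
T_\mu(P)(z) = U_\alpha(P)(z^2)
\]
for every $P\in\CC[z]$: in the defining sum $\sum_{i\le j}\mu_{j-i}a_ia_jz^{i+j}$ only terms with $j-i$ even survive, and setting $i=k-\ell$, $j=k+\ell$ reproduces the coefficients of $U_\alpha(P)$. In particular $T_\mu((1+z)^n)(z)=U_\alpha((1+z)^n)(z^2)$, which will match condition (ii).

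The bridge between the two operators is the elementary observation that for a real polynomial $R$, the polynomial $R(z^2)\in\RR[z]$ is weakly Hurwitz stable if and only if $R\in\PP\cup\{0\}$. Indeed, if $R$ has only real non-positive zeros then $R(z^2)$ has only purely imaginary zeros, while a hypothetical zero $w_0=re^{i\theta}$ of $R$ with $\theta\in(-\pi,\pi)$ would supply a zero $\sqrt{r}\,e^{i\theta/2}$ of $R(z^2)$ with strictly positive real part. Applied to $R=U_\alpha(P)$, which is real whenever $\alpha$ and $P$ are real, this gives $U_\alpha(P)\in\PP_n\cup\{0\}$ if and only if $T_\mu(P)$ is weakly Hurwitz stable.

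With this dictionary, Theorem \ref{GWS2} applied to our $\mu$ immediately delivers the equivalence of (i) and (ii) of Theorem \ref{alg-U}: Theorem \ref{GWS2}(ii) translates to Theorem \ref{alg-U}(i), and Theorem \ref{GWS2}(iii) translates to Theorem \ref{alg-U}(ii). It remains to match Theorem \ref{GWS2}(iv) (weak Hurwitz stability of $P_{\mu,n}$) with Theorem \ref{alg-U}(iii). Computing $\gamma_k$ from our $\mu$ gives $\gamma_{2m+1}=0$ and $\gamma_{2m}=\sum_{\ell=0}^{m}\binom{2m}{m-\ell}\alpha_\ell$, whence
\[
P_{\mu,n}(z) \;=\; n!\sum_{m=0}^{\lfloor n/2\rfloor}\frac{z^{n-2m}}{(n-2m)!}\sum_{\ell=0}^{m}\frac{\alpha_\ell}{(m-\ell)!\,(m+\ell)!} \;=\; n!\,z^n\,Q(1/z^2),
\]
where $Q(z)$ is precisely the polynomial displayed in condition (iii). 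Applying the squaring observation to $R=Q$ closes the loop: $P_{\mu,n}$ is weakly Hurwitz stable iff $Q\in\PP\cup\{0\}$.

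The only actual work is the combinatorial bookkeeping in the last display; all conceptual content has been deposited in Theorem \ref{GWS2}. I anticipate no deep obstacle, but the argument depends crucially on $\alpha$ being real, since this is what lets us transfer weak Hurwitz stability of $R(z^2)$ back to membership of $R$ in $\PP$ via the squaring trick; dropping reality would require a genuinely different tool.
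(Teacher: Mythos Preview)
Your proof is correct and follows essentially the same route as the paper: set $\mu=\{\alpha_0,0,\alpha_1,0,\ldots\}$ and reduce everything to Theorem~\ref{GWS2}. Where the paper invokes the Hermite--Biehler theorem to pass between ``$T_\mu(P)$ weakly Hurwitz stable'' and ``$U_\alpha(P)\in\PP_n$'', you use the elementary squaring observation; since $T_\mu(P)$ has identically zero odd part for this $\mu$, Hermite--Biehler collapses to exactly your statement, so the shortcut is legitimate and arguably cleaner.

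One small expository point: in the final step you have $P_{\mu,n}(z)=n!\,z^{n}Q(1/z^{2})$, not $Q(z^{2})$, so your squaring observation does not apply verbatim with $R=Q$. Insert the remark that $z\mapsto 1/z$ is a bijection of the open right half-plane, so $P_{\mu,n}$ is weakly Hurwitz stable iff $z^{n}P_{\mu,n}(1/z)=n!\,Q(z^{2})$ is; then your observation applies directly.
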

\begin{proof}
Let $\mu=\{\alpha_0,0, \alpha_1,0, \alpha_2,\ldots\}$, and consider the operator $T_\mu$ given by \eqref{tmu}. Then $U_\alpha= T_\mu^E$. By the Hermite--Biehler theorem, for each $P \in \PP_n$,  
$$
T_\mu(P) \mbox{ is weakly Hurwitz stable  if and only if }  U_\alpha(P) \in \PP_n. 
$$
Now, 
$$
P_{\mu,n}(z) = n!\sum_{k=0}^{\lfloor n/2 \rfloor} \left( \sum_{j=0}^k \frac {\alpha_j}{(k+j)!(k-j)!} \right)\frac{z^{n-2k}}{(n-2k)!}, 
$$
so $P_{\mu,n}(z)$ is weakly Hurwitz stable or identically zero if and only if (iii) holds. The theorem follows from Theorem \ref{GWS2}. 
\end{proof}
\begin{example}
Let us use Theorem \ref{alg-U} to give a second proof of Conjecture \ref{conMSS}. In this situation $\alpha=\{1,-1,0, 0, \ldots\}$, and 
$$
U_\alpha((1+z)^n)= \sum_{k=0}^n\left( \binom {n} k^2 - \binom {n} {k-1}\binom {n} {k+1}\right)z^k=\sum_{k=0}^n \frac 1 {n+1} \binom {n+1} k \binom {n+1} {k+1} z^k. 
$$
These polynomials are known as the \emph{Narayana polynomials}. There are numerous proofs that the Narayana polynomials have only real zeros. The simplest is probably based on  the Mal\'o Theorem, see e.g. \cite[Theorem 2.4]{CC3}. 
\end{example}

The corresponding theorem for $V_\alpha$ reads as follows. 
\begin{theorem}\label{alg-V}
Let $\alpha=\{\alpha_k\}_{k=0}^\infty$ be a sequence of real numbers, and let $n\in \NN$. The following are equivalent. 
\begin{itemize}
\item[(i)] $V_\alpha(\PP_n) \subseteq \PP_n\cup\{0\}$;
\item[(ii)]  $V_\alpha\left((1+z)^n\right) \in \PP_n\cup\{0\}$;
\item[(iii)]  
$$
\sum_{k=0}^{\lfloor (n-1)/2 \rfloor} \left( \sum_{j=0}^k \frac {\alpha_j}{(k+1+j)!(k-j)!} \right)\frac{z^k}{(n-2k-1)!} \in \PP_n\cup\{0\}.
$$
\end{itemize}
\end{theorem}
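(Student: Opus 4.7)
The plan is to mirror the proof of Theorem~\ref{alg-U}, using an odd-indexed sequence $\mu$ in place of the even-indexed one. Concretely, I set $\mu=\{0,\alpha_0,0,\alpha_1,0,\alpha_2,\ldots\}$, so that $\mu_{2k+1}=\alpha_k$ and $\mu_{2k}=0$. With this choice, the sum in \eqref{tmu} only survives when $j-i$ is odd; writing $j-i=2\ell+1$, $i=k-\ell$, $j=k+\ell+1$ then gives
\[
T_\mu(P)(z)=z\cdot V_\alpha(P)(z^2),
\]
so $T_\mu(P)^E\equiv 0$ and $T_\mu(P)^O=V_\alpha(P)$. By the first bullet of the Hermite--Biehler theorem, $T_\mu(P)$ is weakly Hurwitz stable if and only if $V_\alpha(P)\in\PP\cup\{0\}$; the sign condition on coefficients is automatic, since any polynomial in $\PP$ factors as $c\prod(1+\rho_j z)$ with $\rho_j\geq 0$. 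Combined with the equivalences of Theorem~\ref{GWS2}, this matches (i) and (ii) of Theorem~\ref{alg-V} with (ii) and (iii) of Theorem~\ref{GWS2}; what remains is to identify condition (iv) of Theorem~\ref{GWS2} — weak Hurwitz stability of $P_{\mu,n}(z)$ — with condition (iii) of Theorem~\ref{alg-V}.

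For this I would compute $\gamma_k$ from \eqref{gammas}. Since $\mu_{k-2j}$ vanishes unless $k$ is odd, $\gamma_k=0$ for even $k$, and $\gamma_{2m+1}=\sum_{j=0}^m\binom{2m+1}{m-j}\alpha_j$. Inserting this into $P_{\mu,n}(z)=\sum_k\gamma_k\binom{n}{k}z^{n-k}$ and collapsing the binomial product $\binom{n}{2m+1}\binom{2m+1}{m-j}=n!/[(n-2m-1)!(m-j)!(m+j+1)!]$ yields the formal identity
\[
P_{\mu,n}(z)=n!\,z^{n-1}\,R(1/z^2),
\]
where $R(z)$ is precisely the polynomial displayed in condition (iii) of Theorem~\ref{alg-V}.

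To close the loop I would translate weak Hurwitz stability of $P_{\mu,n}$ into the property $R\in\PP\cup\{0\}$. The substitution $w=1/z^2$ sends the open right half-plane $\{\Re(z)>0\}$ two-to-one onto $\CC\setminus\{x\in\RR:x\leq 0\}$, and carries the imaginary axis minus the origin onto the negative reals; so from the displayed identity the non-zero zeros of $P_{\mu,n}$ lie on the imaginary axis exactly when the non-zero zeros of $R$ lie on the negative real axis. The remaining zeros of $P_{\mu,n}$ all sit at $z=0$ and do not obstruct weak Hurwitz stability. The only mildly delicate step is this bookkeeping of zeros at $0$ and $\infty$ under $w=1/z^2$, but it is handled by the same argument used tacitly in the proof of Theorem~\ref{alg-U}; with it in place, the equivalence (i)$\Leftrightarrow$(ii)$\Leftrightarrow$(iii) follows.
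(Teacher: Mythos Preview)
Your proposal is correct and follows exactly the approach of the paper, which simply sets $\mu=\{0,\alpha_0,0,\alpha_1,0,\ldots\}$ so that $V_\alpha=T_\mu^O$ and then says the proof proceeds as in Theorem~\ref{alg-U}. Your write-up is in fact more detailed than the paper's own proof: you spell out the identity $T_\mu(P)(z)=z\,V_\alpha(P)(z^2)$, compute $\gamma_{2m+1}$ explicitly, and make the substitution $w=1/z^2$ linking $P_{\mu,n}$ to the polynomial in~(iii) explicit, all of which the paper leaves to the reader.
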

\begin{proof}
Consider $\mu=\{0,\alpha_0, 0, \alpha_1, 0,\ldots\}$. The proof proceeds just as  the proof of Theorem \ref{alg-U}, since $V_\alpha= T_\mu^O$.
\end{proof}

\section{Transcendental  P\'olya--Schur characterizations of  transformations}
In this section we provide transcendental characterizations of various transformations. The following spaces of entire functions are relevant for our purposes. 
\begin{itemize}
\item $\HH(\CC)$ is the set of entire functions that are limits, uniformly on compact subsets of $\CC$, of univariate polynomials that have zeros only  in the closed left half-plane;
\item $\HH(\RR)$ is the space of entire functions in $\HH(\CC)$ with real coefficients;
\item The \emph{Laguerre--P\'olya class}, $\LP$, of entire functions consists of all entire functions that are limits, uniformly on compact subsets of $\CC$,  of real polynomials with only real zeros. A function $\phi$ is in $\LP$ if and only if it can be expressed in the form 
$$
\phi(z)= C z^n e^{-az^2 +bz}\prod_{j=0}^\infty (1+\rho_jz)e^{-\rho_j z},
$$
where $n \in \NN$, $a,b,c \in \RR$, $a \geq 0$, and $\{\rho_j\}_{j=0}^\infty \subset \RR$ satisfies $\sum_{j=0}^\infty \rho_j^2< \infty$, see \cite[Chapter VIII]{Le};
\item $\LP^+$ consists of those functions in the Laguerre--P\'olya class that have non-negative Taylor coefficients. A function $\phi$ is in $\LP^+$ if and only it can be expressed as 
$$
\phi(z) = Cz^M e^{az}\prod_{j=0}^\infty (1+\rho_jz), 
$$
where $a, C\geq 0, M \in \NN$ and $\sum_{j=0}^\infty \rho_j < \infty$, see \cite[Chapter VIII]{Le}.  
\end{itemize} 

The following very useful lemma is due to Sz\'asz \cite{Sz}. 

\begin{lemma}[Sz\'asz]\label{lagg}
Let $H \subset \CC$ be an open half-plane with boundary containing the origin, and let $f(z)= b_Mz^M+b_{M+1}z^{M+1} +\cdots+b_Nz^N \in \CC[z]$, where $b_Mb_N \neq 0$. If $f(\zeta) \neq 0$ for all $\zeta \in H$,  then
 \begin{equation}\label{lag}
 |f(z)| \leq |b_M||z|^M\exp\left( \frac {|b_{M+1}|}{|b_{M}|}|z| + 3|z|^2 \frac {|b_{M+1}|^2}{|b_M|^2}+3|z|^2 \frac {|b_{M+2}|}{|b_M|}\right),  
\end{equation}
 for all $z \in \CC$. 
\end{lemma}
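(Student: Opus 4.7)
The plan is to normalize the polynomial, factor it in terms of its zeros, and then apply an unrestricted Weierstrass factor estimate. First I would set $g(z)=f(z)/(b_Mz^M)=1+a_1z+\cdots+a_{N-M}z^{N-M}$, where $a_k=b_{M+k}/b_M$. Since the boundary of $H$ contains $0$ (so $0\notin H$), the polynomial $g$ inherits from $f$ the property of being non-vanishing on $H$. A rotation $z\mapsto e^{i\theta}z$ preserves both $|z|$ and the moduli of the Taylor coefficients, so I may assume $H=\{z:\Re(z)>0\}$. Then each zero $z_j$ of $g$ satisfies $\Re(z_j)\le 0$, so $w_j:=-1/z_j$ satisfies $\Re(w_j)\ge 0$, and by Vieta
\[
g(z)=\prod_{j=1}^{n}(1+w_jz),\qquad a_1=\sum_{j}w_j,\qquad a_2=\sum_{i<j}w_iw_j.
\]

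Second, I would rewrite $g$ via Weierstrass elementary factors by setting $h(z)=g(z)e^{-a_1z}=\prod_j(1+w_jz)e^{-w_jz}$, so each factor equals $E_1(-w_jz)$, where $E_1(v):=(1-v)e^{v}$. The central analytic input is the \emph{unrestricted} bound
\[
|E_1(v)|\le \exp(|v|^2/2)\qquad (v\in\CC),
\]
which follows from $\log s\le s-1$ applied to $s=|1-v|^2$, since $\log|E_1(v)|^2=\log|1-v|^2+2\Re(v)\le(|1-v|^2-1)+2\Re(v)=|v|^2$. Multiplying these factor-by-factor bounds and using $|e^{a_1z}|\le e^{|a_1||z|}$ would give
\[
|g(z)|\le \exp\!\Bigl(|a_1||z|+\tfrac{|z|^2}{2}\sum_{j=1}^{n}|w_j|^2\Bigr).
\]

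Third, I would estimate $\sum_j|w_j|^2$ in terms of $|a_1|$ and $|a_2|$; this is where the half-plane hypothesis $\Re(w_j)\ge 0$ does its work. Writing $w_j=\alpha_j+i\beta_j$ with $\alpha_j\ge 0$, the power-sum identity $\sum_jw_j^2=a_1^2-2a_2$ yields $\sum_j(\alpha_j^2-\beta_j^2)=\Re(a_1^2-2a_2)\ge-(|a_1|^2+2|a_2|)$, so $\sum_j\beta_j^2\le\sum_j\alpha_j^2+|a_1|^2+2|a_2|$. Since $\alpha_j\ge 0$, one also has $\sum_j\alpha_j^2\le(\sum_j\alpha_j)^2=(\Re a_1)^2\le|a_1|^2$, and combining gives $\sum_j|w_j|^2\le 3|a_1|^2+2|a_2|$. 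Plugging this in, enlarging the constants ($\tfrac32\le3$ and $1\le3$), and multiplying through by $|b_M||z|^M$ would yield \eqref{lag} with $|a_1|=|b_{M+1}|/|b_M|$ and $|a_2|=|b_{M+2}|/|b_M|$.

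The main obstacle is spotting the right form of Step~2: the apparent mismatch between the per-factor condition $\Re(w_j)\ge 0$ and the global freedom of $z\in\CC$ is dissolved by the fact that $|E_1(v)|\le e^{|v|^2/2}$ holds for \emph{every} $v\in\CC$ with no half-plane restriction. Once this global Weierstrass estimate is in place, the closing step is a short Vieta calculation whose only subtlety is that the sign constraint $\alpha_j\ge 0$ is what prevents the degree $n$ from leaking into the final inequality.
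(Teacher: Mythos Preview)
Your argument is correct. The paper itself does not supply a proof of this lemma; it simply attributes the result to Sz\'asz \cite{Sz} and uses it as a black box, so there is no ``paper's proof'' to compare against. For what it is worth, your approach---normalize to a monic polynomial with constant term $1$, use the global Weierstrass estimate $|(1-v)e^{v}|\le e^{|v|^2/2}$, and then bound $\sum_j|w_j|^2$ via Newton's identity together with the key inequality $\sum_j\alpha_j^2\le(\sum_j\alpha_j)^2$ that exploits $\alpha_j\ge 0$---is essentially the classical route and recovers the stated constants with room to spare. One small cosmetic point: you should note explicitly that $g(0)=1$ guarantees $z_j\neq 0$, so that $w_j=-1/z_j$ is well-defined, and that the degenerate cases $N-M\le 1$ (where $a_2$ or both $a_1,a_2$ vanish) are covered trivially by your formula.
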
 
\begin{remark}\label{S-pr}
The typical use of Lemma \ref{lagg} is as follows. Suppose that $\{ P_n(z) \}_{n=0}^\infty$ is a sequence of polynomials 
that are non-vanishing in  $H$, where $H$ is as in Lemma \ref{lagg}. Write 
$$
P_n(z)=\sum_{k=M}^{N_n} a_{n,k}z^k, 
$$
and let $\{a_k\}_{k=M}^\infty$ be a sequence of complex numbers with $a_M \neq 0$.  If $\lim_{n \rightarrow \infty} a_{n, k} =a_k $ for each $k \geq M$, then there is a subsequence of $\{ P_n(z) \}_{n=0}^\infty$ converging, uniformly on compact subsets of $\CC$, to the entire function $\sum_{k=M}^\infty a_k z^k$. This follows from Montel's theorem, since $\{ P_n(z) \}_{n=0}^\infty$ is locally uniformly bounded sequence by Lemma \ref{lagg}.  
\end{remark}

For a proof of the next lemma we refer to  \cite[Chapter VIII]{Le} or \cite[Theorem 12]{BB}. 

\begin{lemma}\label{lghu}
Let $\phi(z)= \sum_{k=0}^\infty a_k z^k/k!$ be a formal power series with complex coefficients, and let $H \subset \CC$ be an open half-plane with boundary containing the origin. Then  
$\phi(z)$ is an entire function which is the limit, uniformly on compact subsets of $\CC$, of polynomials that are non-vanishing in $H$  if and only if 
$$
\phi_n(z)= \sum_{k=0}^n \binom n k a_k z^{k}
$$
is either identically zero or non-vanishing in $H$, for each $n \in \NN$. 
\end{lemma}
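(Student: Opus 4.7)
The plan is to handle the two directions separately, with both reducing to a polynomial-level preservation fact combined with standard approximation tools.

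For the direction assuming each $\phi_n$ is identically zero or non-vanishing in $H$, I would rescale by defining $Q_n(z)=\phi_n(z/n)$. Since $H$ is an open half-plane through the origin, it is invariant under multiplication by the positive real $1/n$, and hence each $Q_n$ is again identically zero or non-vanishing in $H$. The coefficient of $z^k$ in $Q_n$ equals $\binom{n}{k}a_k/n^k$, and the elementary estimate $\binom{n}{k}/n^k\to 1/k!$ shows that this coefficient tends to $a_k/k!$ for every fixed $k$. Assume $\phi\not\equiv 0$ (the other case is trivial) and let $M$ be the smallest index with $a_M\neq 0$; the coefficient of $z^M$ in $Q_n$ is then nonzero for all sufficiently large $n$, and the ratios of its first few coefficients remain bounded as $n\to\infty$. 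Sz\'asz's lemma (Lemma \ref{lagg}) therefore bounds $\{Q_n\}$ uniformly on every compact subset of $\CC$, and the extraction argument of Remark \ref{S-pr} produces a subsequence of $\{Q_n\}$ converging uniformly on compacts to the entire function $\sum_{k\ge 0}a_k z^k/k!=\phi$, which is the required approximation.

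For the converse, suppose $\phi$ is entire and the uniform-on-compacts limit of polynomials $P_m$ non-vanishing in $H$. Uniform convergence on compacts implies $P_m^{(k)}(0)\to a_k$ for each $k$, so the Jensen polynomials
$$(P_m)_n(z) = \sum_{k=0}^n \binom{n}{k} P_m^{(k)}(0)\, z^k \longrightarrow \phi_n(z)$$
coefficientwise, and hence uniformly on compact subsets of $\CC$ (both sides are polynomials of degree at most $n$). Granting the polynomial preservation fact that $(P_m)_n$ is itself either identically zero or non-vanishing in $H$, Hurwitz's theorem then forces $\phi_n$ to be identically zero or non-vanishing in $H$.

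The main obstacle is thus the polynomial preservation statement: if $P\in\CC[z]$ is non-vanishing in $H$, then $P_n(z)=\sum_{k=0}^n\binom{n}{k}P^{(k)}(0)\,z^k$ is either identically zero or non-vanishing in $H$. The operator $P\mapsto P_n$ is the linear endomorphism of $\CC[z]_{\le n}$ acting as $z^k\mapsto\binom{n}{k}k!\,z^k$, and its $H$-stability preservation follows from the P\'olya--Schur type symbol criterion of \cite[Theorem 12]{BB}; equivalently, one can polarize $P_n$ into a symmetric multi-affine polynomial and invoke the Grace--Walsh--Szeg\H{o} theorem (Theorem \ref{GWS}) in the spirit of the arguments carried out earlier in this paper.
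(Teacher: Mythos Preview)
The paper does not actually prove this lemma; it only says ``For a proof of the next lemma we refer to \cite[Chapter VIII]{Le} or \cite[Theorem 12]{BB}.'' Your proposal supplies the argument such a proof would contain, and it is correct. The two directions proceed exactly as one expects: the ``if'' direction via the rescaling $Q_n(z)=\phi_n(z/n)$ together with Sz\'asz's lemma and Remark \ref{S-pr}, and the ``only if'' direction by forming Jensen polynomials of the approximants and applying Hurwitz's theorem. The step you isolate as the main obstacle---that the diagonal operator $z^k \mapsto \binom{n}{k} k!\,z^k$ preserves non-vanishing in an open half-plane through the origin---is indeed the crux, and it follows from \cite[Theorem 12]{BB} (or classically from Laguerre--P\'olya--Schur theory) since the associated symbol is $\sum_k \binom{n}{k} z^k = (1+z)^n \in \LP^+$.

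One minor imprecision: you describe $P\mapsto P_n$ as an endomorphism of $\CC[z]_{\le n}$, but the approximating polynomials $P_m$ may have degree exceeding $n$. The operator should be regarded as a linear map on all of $\CC[z]$, with the same formula and the convention $\binom{n}{k}=0$ for $k>n$; the stability-preservation argument goes through unchanged.
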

\begin{theorem}\label{transhur}
Let $\mu=\{\mu_k\}_{k=0}^\infty$ be a sequence of complex numbers and let $\{\gamma_k\}_{k=0}^\infty$ be defined by 
\eqref{gammas}. Define a formal power series by 
$$
T_\mu(e^z)= \sum_{k=0}^\infty \frac {\gamma_k}{k!} z^k. 
$$
The following are equivalent. 
\begin{itemize}
\item[(i)]
For all polynomials $P(z)$  with only real and non-positive zeros,  the polynomial 
$T_\mu(P(z))$ is either identically zero or weakly Hurwitz stable; 
\item[(ii)] $T_\mu(e^z) \in \HH(\CC)\cup\{0\}$;
\item[(iii)] $T_\mu(\LP^+) \subseteq \HH(\CC)\cup\{0\}$. 
\end{itemize}
\end{theorem}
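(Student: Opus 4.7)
The strategy is to close the cycle (iii) $\Rightarrow$ (ii) $\Rightarrow$ (i) $\Rightarrow$ (iii). The first arrow is immediate since $e^z\in\LP^+$. The equivalence (ii) $\Leftrightarrow$ (i) reduces the transcendental statement to the finite-dimensional characterization in Theorem \ref{GWS2} via Lemma \ref{lghu}. The implication (i) $\Rightarrow$ (iii) is the step needing real work: it is handled by approximating an arbitrary function in $\LP^+$ by polynomials in $\PP$, applying (i), and passing to the limit with Sz\'asz' Lemma \ref{lagg} and Hurwitz' theorem on the continuity of zeros.

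For (ii) $\Leftrightarrow$ (i), apply Lemma \ref{lghu} to the open right half-plane $H=\{\Re z>0\}$ and to the formal series $T_\mu(e^z)=\sum_{k\geq 0}\gamma_k z^k/k!$. The polynomials it produces are
$$
\phi_n(z)=\sum_{k=0}^n\binom{n}{k}\gamma_k z^k=z^nP_{\mu,n}(1/z),
$$
that is, the reverses of the $P_{\mu,n}$. Since $z\mapsto 1/z$ is an involution of $H$, $\phi_n$ is weakly Hurwitz stable (or identically zero) if and only if $P_{\mu,n}$ is. Thus (ii) is equivalent to the weak Hurwitz stability (or vanishing) of every $P_{\mu,n}$, which by the equivalence (iv) $\Leftrightarrow$ (ii) of Theorem \ref{GWS2}, applied for all $n$, is precisely (i).

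For (i) $\Rightarrow$ (iii), pick a nonzero $\phi\in\LP^+$ vanishing at $0$ to order $M$ with leading Taylor coefficient $C=a_M>0$. Using the product representation of $\LP^+$ functions, choose polynomials $P_n\in\PP$ that vanish at $0$ to the same order $M$ and satisfy $P_n\to\phi$ uniformly on compact subsets of $\CC$. By (i), each $T_\mu(P_n)$ is weakly Hurwitz stable or identically zero, and its Taylor coefficients — each a finite polynomial in the Taylor coefficients of $P_n$ — converge to the corresponding (formal) Taylor coefficients of $T_\mu(\phi)$. To upgrade coefficient-wise convergence to uniform convergence on compacts I use Sz\'asz' Lemma \ref{lagg} in the spirit of Remark \ref{S-pr}: the polynomial $T_\mu(P_n)$ vanishes at $0$ to order $\geq 2M$, with $z^{2M}$-coefficient $\mu_0C^2$. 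When $\mu_0\neq 0$, Sz\'asz applied to $T_\mu(P_n)/z^{2M}$ yields a locally uniform bound; Montel's theorem then extracts a subsequential limit $F$, coefficient matching forces $F=T_\mu(\phi)/z^{2M}$, and Hurwitz' theorem places $F$ in $\HH(\CC)\cup\{0\}$. Multiplying by $z^{2M}$ gives $T_\mu(\phi)\in\HH(\CC)\cup\{0\}$.

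The main obstacle is the degenerate case $\mu_0=0$, where the $z^{2M}$-coefficient of $T_\mu(P_n)$ vanishes and Sz\'asz cannot be applied at that index. The plan is to pinpoint the true order of vanishing of $T_\mu(\phi)$ at $0$: with $k_0:=\min\{k:\mu_k\neq 0\}$, the coefficient of $z^{2M+k_0}$ in $T_\mu(\phi)$ simplifies to $\mu_{k_0}a_Ma_{M+k_0}$, and the analogous coefficient of $T_\mu(P_n)$ stabilizes to this value as $n\to\infty$. When it is nonzero, Sz\'asz applied to $T_\mu(P_n)/z^{2M+k_0}$ returns the argument to the non-degenerate situation. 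If further cancellations force every candidate lowest nonzero coefficient to vanish, one concludes $T_\mu(\phi)\equiv 0\in\HH(\CC)\cup\{0\}$ and the proof is complete.
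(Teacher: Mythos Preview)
Your argument is correct and follows essentially the same route as the paper. The one difference worth noting is in (i)$\Rightarrow$(iii): rather than generic approximants from the product representation, the paper takes the Jensen-type polynomials $\phi_n(z/n)=\sum_{k\le n}\binom{n}{k}a_k(z/n)^k$ (writing $\phi=\sum a_kz^k/k!$), which lie in $\PP$ by Lemma~\ref{lghu} and whose $k$th coefficient vanishes exactly when $a_k$ does. This forces the order of vanishing of $T_\mu(\phi_n(z/n))$ at $0$ to equal that of $T_\mu(\phi)$ for all large $n$, so Remark~\ref{S-pr} applies directly and the case split on $\mu_0$ is not needed. Your handling of that degenerate case is mostly fine, but the last sentence is a little loose: if $a_{M+k_0}=0$ while your chosen $P_n$ have $a_{n,M+k_0}\neq 0$, the Sz\'asz bound at index $2M+k_0$ blows up, and you cannot simply ``move to the next candidate'' because the intermediate coefficients of $T_\mu(P_n)$ need not vanish identically. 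The clean repair is to observe that a nonzero $\phi\in\LP^+$ has no internal zero Taylor coefficients beyond index $M$, so $a_{M+k_0}=0$ forces $\phi$ to be a polynomial, and then (i) applies to $\phi$ directly.
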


\begin{proof} Note that $\phi_n(z)$ is weakly Hurwitz stable if and only if $z^n\phi_n(1/z)$ is weakly Hurwitz stable. 
Combining Theorem \ref{GWS} and Lemma \ref{lghu} yields the equivalence of (i) and (ii). Clearly (iii) $\Rightarrow$ (ii).   Assume (i) and let $\phi \in \LP^+$. Then, by Lemma \ref{lghu}, $\phi_n(z)$ is a polynomial with only real and non-positive zeros (unless identically zero) for each $n \in \NN$. Thus $T_\mu(\phi_n(z/n))$ is weakly Hurwitz stable or identically zero for each $n \geq 1$. Note that 
$$
\lim_{n\rightarrow \infty} \binom n k  \frac {a_k} {n^k} = \frac {a_k} {k!}, 
$$
for each $k \in \NN$. 
By Remark \ref{S-pr}, there is a subsequence $\{n_j\}_{j=0}^\infty$ such that 
$$\lim_{j \rightarrow \infty} T_\mu(\phi_{n_j}(z/n_j))=T_\mu(\phi(z)),$$ where the convergence is uniform on each compact subset of $\CC$. Hence 
$T_\mu(\phi) \in \HH(\CC)\cup\{0\}$. 
\end{proof}

\begin{remark}
The characterization of the ``good'' sequences  $\{\mu_k\}_{k=0}^\infty$ in Theorem \ref{transhur} is in terms of the sequences $\{\gamma_k\}_{k=0}^\infty$. How do we translate between the two sequences? The answer is classical and is called the \emph{Chebyshev relation}, see \cite[p. 54]{Ri}:
$$
\gamma_k= \sum_{j=0}^{\lfloor k/2 \rfloor}\binom k j \mu_{k-2j}, \quad \mbox{ for all } k \in \NN
$$
if and only if 
$$
\mu_k= \sum_{j=0}^{\lfloor k/2 \rfloor}(-1)^j \frac k {k-j} \binom {k-j} j \gamma_{k-2j}, \quad \mbox{ for all } k \in \NN.
$$
\end{remark}

The following lemma follows easily from Remark \ref{S-pr} and the Hermite-Biehler Theorem. 
\begin{lemma}\label{trevod}
Let $\phi(z)$ be formal power series with real and non-negative coefficients. Then $\phi(z) \in \LP^+$ if and only if $\phi(z^2) \in \HH(\RR)$. 
\end{lemma}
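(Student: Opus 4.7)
The plan is to prove both implications by polynomial approximation: the forward direction is an immediate factorization, while the backward direction follows the hint and combines Lemma \ref{lghu} with Remark \ref{S-pr}.

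For the forward direction, assume $\phi\in\LP^+$. Using the product form of $\LP^+$ in the excerpt, $\phi$ is the locally uniform limit of real polynomials $p_n(z) = c_n\prod_k(1+\rho_{n,k}z)$ with $c_n,\rho_{n,k}\ge 0$ (for instance, truncate the infinite product and replace $e^{az}$ by $(1+az/n)^n$). Then $p_n(z^2) = c_n\prod_k(1+\rho_{n,k}z^2)$ has only the purely imaginary zeros $\pm i/\sqrt{\rho_{n,k}}$, all in the closed left half-plane, and $p_n(z^2)\to\phi(z^2)$ uniformly on compact subsets of $\CC$, so $\phi(z^2)\in\HH(\RR)$.

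For the backward direction, assume $\phi(z^2)\in\HH(\RR)\subset\HH(\CC)$, and write $\phi(z)=\sum_{m\ge 0}a_m z^m$ with $a_m\ge 0$, so that $\phi(z^2)=\sum_k c_k z^k/k!$ with $c_{2m}=(2m)!\,a_m$ and $c_{2m+1}=0$. Lemma \ref{lghu}, applied with $H$ the open right half-plane, asserts that the real polynomial
$$
(\phi(z^2))_n(z)=\sum_{k=0}^n\binom{n}{k}c_k z^k=\chi_n(z^2), \qquad \chi_n(w):=\sum_{m=0}^{\lfloor n/2\rfloor}\binom{n}{2m}(2m)!\,a_m w^m,
$$
is either identically zero or non-vanishing in $H$. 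Since $\chi_n(z^2)$ is even with real coefficients, this forces all its zeros onto the imaginary axis; equivalently, $\chi_n$ has only non-positive real zeros. The $m$-th coefficient of the rescaled polynomial $\chi_n(w/n^2)$ equals $\tfrac{n(n-1)\cdots(n-2m+1)}{n^{2m}}\,a_m$, which tends to $a_m$ as $n\to\infty$; thus $\chi_n(w/n^2)$ is a real polynomial with only non-positive real zeros whose coefficients converge to those of $\phi(w)$. By Remark \ref{S-pr}, a subsequence converges uniformly on compact subsets of $\CC$ to $\phi(w)$. Hence $\phi$ is a locally uniform limit of real polynomials with only non-positive real zeros, so $\phi\in\LP$; combined with $a_m\ge 0$, we conclude $\phi\in\LP^+$.

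The only delicate step is the passage from ``$\chi_n(z^2)$ is non-vanishing in the open right half-plane'' to ``$\chi_n$ has only non-positive real zeros,'' which uses both the reality of the coefficients (to force real zeros) and the geometry of the squaring map (to force non-positivity). Once this is in hand, the rescaling by $n^2$ and Remark \ref{S-pr} assemble these polynomials into the desired $\LP^+$-approximation of $\phi$.
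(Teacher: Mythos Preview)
Your argument is correct and follows essentially the same route the paper indicates (``follows easily from Remark~\ref{S-pr} and the Hermite--Biehler Theorem''). The only differences are cosmetic: where the paper would invoke Hermite--Biehler to pass from ``$\chi_n(z^2)$ is weakly Hurwitz stable'' to ``$\chi_n$ has only non-positive real zeros'' (this is exactly the case $P^O\equiv 0$ of that theorem), you reprove that special case directly via the squaring map; and you route through Lemma~\ref{lghu} to manufacture the approximating polynomials $\chi_n$, whereas one could equally well take any sequence of weakly Hurwitz stable real polynomials $q_n\to\phi(z^2)$ from the definition of $\HH(\RR)$, apply Hermite--Biehler to obtain $q_n^E\in\PP$, and feed those into Remark~\ref{S-pr}. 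One small imprecision: in your ``delicate step'' you attribute the conclusion partly to the reality of the coefficients, but in fact only evenness is used --- if $\chi_n(z_0^2)=0$ then also $\chi_n((-z_0)^2)=0$, and one of $\pm z_0$ has non-negative real part, forcing $\Re(z_0)=0$.
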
 

\begin{theorem}\label{trans-U}
Let $\alpha=\{\alpha_k\}_{k=0}^\infty$ be a sequence of real numbers, and let $n\in \NN$. The following are equivalent. 
\begin{itemize}
\item[(i)] $U_\alpha(\PP) \subseteq \PP\cup\{0\}$;
\item[(ii)]  $U_\alpha(e^z) \in  \LP^+\cup\{0\}$, that is, 
$$\sum_{k=0}^\infty \left( \sum_{j=0}^k \frac {\alpha_j}{(k+j)!(k-j)!} \right)z^k \in \LP^+\cup\{0\};$$
\item[(iii)] $U_\alpha(\LP^+) \subseteq \LP^+\cup\{0\}$. 
\end{itemize}
\end{theorem}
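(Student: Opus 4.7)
The plan is to establish the cyclic chain (iii) $\Rightarrow$ (ii) $\Rightarrow$ (i) $\Rightarrow$ (iii). The first implication is immediate, since $e^z\in\LP^+$ forces $U_\alpha(e^z)\in\LP^+\cup\{0\}$.

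For (ii) $\Rightarrow$ (i) I will reduce, via Theorem \ref{alg-U}, to showing that for each $n\in\NN$ the polynomial
$$Q_n(z) = \sum_{k=0}^{\lfloor n/2\rfloor}\beta_k\frac{z^k}{(n-2k)!}, \qquad \beta_k := \sum_{j=0}^k\frac{\alpha_j}{(k-j)!(k+j)!},$$
belongs to $\PP\cup\{0\}$; note that the $\beta_k$ are precisely the Taylor coefficients of $U_\alpha(e^z)$. Assuming (ii), Lemma \ref{trevod} converts $U_\alpha(e^z)\in\LP^+\cup\{0\}$ into $\sum_k\beta_k z^{2k}\in\HH(\RR)\cup\{0\}$, and Lemma \ref{lghu} applied with $H$ the open right half-plane then guarantees that the polynomial
$$\phi_n(z) = \sum_{2k\le n}\binom{n}{2k}(2k)!\,\beta_k\,z^{2k} = n!\,Q_n(z^2)$$
is weakly Hurwitz stable (or identically zero) for every $n$. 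Since $\{z^2:\Re z>0\}=\CC\setminus(-\infty,0]$, this is equivalent to $Q_n\in\PP\cup\{0\}$, and Theorem \ref{alg-U} closes the implication.

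For (i) $\Rightarrow$ (iii) I approximate an arbitrary $\phi\in\LP^+$ by an explicit sequence of polynomials $p_N\in\PP$ obtained from its Hadamard factorization, say $p_N(z)=Cz^M(1+az/N)^N\prod_{j=1}^N(1+\rho_j z)$ when $\phi(z)=Cz^M e^{az}\prod_{j\ge1}(1+\rho_j z)$. Uniform convergence on compact subsets $p_N\to\phi$ gives coefficientwise convergence, and hence coefficientwise convergence of $U_\alpha(p_N)$ to the formal power series $U_\alpha(\phi)$, because each coefficient of $U_\alpha(p_N)$ is a finite polynomial expression in the coefficients of $p_N$. By (i) the polynomials $U_\alpha(p_N)\in\PP\cup\{0\}$ are non-vanishing in the open right half-plane. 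Remark \ref{S-pr}, built on Sz\'asz's Lemma \ref{lagg}, then extracts a subsequence converging uniformly on compact subsets of $\CC$ to $U_\alpha(\phi)$; being a compact limit of polynomials in $\PP$, this limit lies in $\LP^+\cup\{0\}$.

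The main obstacle is the non-linearity of $U_\alpha$: unlike in the linear settings of \cite{PS,BB}, one cannot deduce the uniform limit behavior of $U_\alpha(p_N)$ from that of $p_N$ by appealing to continuity of a linear operator. The bypass is that coefficientwise convergence is automatic from uniform convergence on a disk, and Sz\'asz's growth estimate supplies the locally uniform boundedness needed to upgrade coefficientwise convergence to uniform convergence on compacts via Montel. A minor bookkeeping issue is matching the lowest nonzero coefficient of $U_\alpha(\phi)$ with that of the approximants $U_\alpha(p_N)$ in order to invoke Remark \ref{S-pr}; this is handled by first reducing to the case $\phi(0)\ne 0$ with $\alpha_0\ne 0$, or otherwise by factoring out the appropriate power of $z$ from both $\phi$ and $\alpha$.
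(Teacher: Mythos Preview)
Your argument is correct and is essentially the paper's proof unrolled: the paper observes that with $\mu=(\alpha_0,0,\alpha_1,0,\ldots)$ the Hermite--Biehler theorem and Lemma~\ref{trevod} translate each of (i), (ii), (iii) into the corresponding condition of Theorem~\ref{transhur} for $T_\mu$, whereas you run the very same ingredients (Lemma~\ref{lghu}, Remark~\ref{S-pr}, and Theorem~\ref{alg-U}) directly at the level of $U_\alpha$ without the stopover at $T_\mu$. One minor imprecision: ``factoring a power of $z$ out of $\alpha$'' has no clear meaning, but the reduction to $\phi(0)\ne0$ via the identity $U_\alpha(z^M\psi)=z^MU_\alpha(\psi)$ already suffices, since a transcendental $\phi\in\LP^+$ with $\phi(0)\ne0$ has all Taylor coefficients strictly positive, so both $U_\alpha(\phi)$ and each approximant $U_\alpha(p_N)$ vanish at $0$ to the same order (namely the least $r$ with $\alpha_r\ne0$), while polynomial $\phi$ need no approximation at all.
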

\begin{proof}
Let  $\mu=\{\alpha_0,0, \alpha_1,0, \alpha_2,\ldots\}$. Then $U_\alpha(\PP) \subseteq \PP\cup\{0\}$ if and only if $T_\mu(\PP) \subseteq \HH(\RR) \cup \{0\}$, by the Hermite--Biehler theorem. By Lemma \ref{trevod}, $U_\alpha(e^z) \in \LP^+$ if and only if $T_\mu(e^z) \in \HH(\RR)$, and $U_\alpha(\LP^+) \subseteq \LP^+\cup\{0\}$ if and only if $T_\mu(\LP^+) \subseteq \HH(\RR) \cup \{0\}$.

\end{proof}
The proof of the next theorem is almost identical to that of Theorem \ref{trans-U}. 
\begin{theorem}\label{trans-V}
Let $\alpha=\{\alpha_k\}_{k=0}^\infty$ be a sequence of real numbers, and let $n\in \NN$. The following are equivalent. 
\begin{itemize}
\item[(i)] $V_\alpha(\PP) \subseteq \PP\cup\{0\}$;
\item[(ii)]  $V_\alpha(e^z) \in \LP^+\cup\{0\}$, that is, 
$$\sum_{k=0}^\infty \left( \sum_{j=0}^k \frac {\alpha_j}{(k+1+j)!(k-j)!} \right)z^k \in \LP^+\cup\{0\};$$
\item[(iii)] $V_\alpha(\LP^+) \subseteq \LP^+\cup\{0\}$. 
\end{itemize}
\end{theorem}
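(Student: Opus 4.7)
The plan is to mimic the proof of Theorem \ref{trans-U}, shifting the interleaving by one in order to capture the ``odd'' operator $V_\alpha$. Concretely, I would set $\mu = \{0,\alpha_0,0,\alpha_1,0,\alpha_2,\ldots\}$, so that $\mu_{2k+1}=\alpha_k$ and $\mu_{2k}=0$ for every $k\in\NN$. Unpacking the definition \eqref{tmu} one sees that only odd powers of $z$ appear in $T_\mu(P)$, and in fact
\[
T_\mu(P)(z)\;=\;z\cdot V_\alpha(P)(z^2)
\]
for every polynomial $P$; in particular $T_\mu(e^z)(z)=z\cdot V_\alpha(e^z)(z^2)$.

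With this choice of $\mu$, $T_\mu(P)$ has vanishing even part, so the Hermite--Biehler theorem gives: $T_\mu(P)$ is weakly Hurwitz stable if and only if $V_\alpha(P)\in\PP$. This dictionary lets me translate the three conditions of Theorem \ref{trans-V} into the three conditions of Theorem \ref{transhur} applied to $\mu$. The passage between $\LP^+$ and the corresponding subclass of $\HH(\RR)$ is handled by Lemma \ref{trevod} together with the obvious fact that multiplication by $z$ preserves membership in $\HH(\RR)$: one obtains $V_\alpha(\phi)\in\LP^+\cup\{0\}$ if and only if $T_\mu(\phi)\in\HH(\RR)\cup\{0\}$, uniformly for $\phi=e^z$, for each $\phi\in\PP$, and for each $\phi\in\LP^+$.

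To match the explicit series displayed in item (ii) I would expand $T_\mu(e^z)(z)$ using $a_k=1/k!$, obtaining
\[
T_\mu(e^z)(z)\;=\;\sum_{m=0}^{\infty}\Bigl(\sum_{j=0}^{m}\frac{\alpha_j}{(m-j)!\,(m+1+j)!}\Bigr)z^{2m+1},
\]
which is exactly $z$ times the power series of item (ii) evaluated at $z^2$. Combined with Lemma \ref{trevod}, this identifies condition (ii) of Theorem \ref{trans-V} with the real analogue of condition (ii) in Theorem \ref{transhur}, and the chain (i)$\Leftrightarrow$(ii)$\Leftrightarrow$(iii) of Theorem \ref{trans-V} reduces to a direct invocation of Theorem \ref{transhur} for the chosen $\mu$.

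I expect the main obstacle to be purely bookkeeping: verifying the identity $T_\mu(P)(z)=z\cdot V_\alpha(P)(z^2)$ and the coefficient computation above, and checking that the real-coefficient versions of Theorem \ref{transhur} and Lemma \ref{trevod} apply (which they do, since $\alpha$ is real forces $\mu$ to be real and so $\HH(\CC)\cap\RR[[z]]=\HH(\RR)$). No analytic ingredient beyond Theorem \ref{transhur}, the Hermite--Biehler theorem, and Lemma \ref{trevod} should be required.
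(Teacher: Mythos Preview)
Your proposal is correct and is exactly the approach the paper intends: set $\mu=\{0,\alpha_0,0,\alpha_1,0,\ldots\}$ so that $T_\mu(P)(z)=z\,V_\alpha(P)(z^2)$, then use the Hermite--Biehler theorem and Lemma~\ref{trevod} (with the harmless factor of $z$) to translate each of (i), (ii), (iii) into the corresponding condition of Theorem~\ref{transhur}. The paper merely remarks that the proof is almost identical to that of Theorem~\ref{trans-U}, and your write-up supplies precisely the bookkeeping needed to make that remark precise.
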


\section{Applications and examples}

Let us apply Theorem \ref{trans-U} to a question posed by Fisk \cite{Fi}. For $r \in \NN$, let $S_r = U_\alpha$ where $\alpha_0=1, \alpha_r=-1$, and $\alpha_i=0$ for all $i \notin \{0,r\}$.  In other words 
$$
S_r\left( \sum_{i=0}^n a_i z^i\right) = \sum_{i=0}^n (a_i^2-a_{i-r}a_{i+r})z^i.
$$
Fisk asked whether $S_r(\PP) \subseteq \PP$ for all $r \in \NN$. We use Theorem \ref{trans-U} and the theory of multiplier sequences to obtain partial results on Fisk's question. 

A sequence of real numbers $\{\lambda_k\}_{k=0}^\infty$ is a {\em multiplier sequence} if for 
each polynomial $\sum_{k=0}^n a_k z^k$ with only real zeros, the polynomial
$
\sum_{k=0}^n \lambda_ka_k z^k
$
is either identically zero or has only real zeros. 

Multiplier sequences were characterized in a seminal paper by P\'olya and Schur \cite{PS}. That multiplier sequences preserve the Laguerre-P\'olya class follows easily from Remark \ref{S-pr} and Lemma \ref{lghu}, see also \cite[Chapter VIII]{Le}.  
\begin{theorem}[P\'olya and Schur]\label{ps}
Let $\{\lambda_k\}_{k=0}^{\infty}$ be a sequence of real numbers, and let 
$T: \RR[z] \rightarrow \RR[z]$ be the corresponding (diagonal) linear operator defined by $T(z^k)=\lambda_k z^k$, for all $k \in \NN$.
 Define $\Phi(z)=T(e^z)$ to be the formal power series 
$$
\Phi(z) = \sum_{k=0}^\infty \frac{\lambda_k}{k!}z^k.
$$
The following  are equivalent:
\begin{itemize}
\item[(i)]  $\{\lambda_k\}_{k=0}^{\infty}$ is a multiplier sequence;  
\item[(ii)] $T(\LP) \subseteq \LP\cup\{0\}$;
\item[(iii)] $\Phi(z)$ defines an entire 
function which is the limit, uniformly on compact sets, of 
polynomials with only real zeros of the same sign;
\item[(iv)] Either $\Phi(z)$ or $\Phi(-z)$ is an entire function 
that can be written as
$$
C z^n e^{az} \prod_{k=1}^\infty (1+ \alpha_k z),
$$ 
where $n \in \NN$, $C \in \RR$, $a,\alpha_k \geq 0$ for all $k \in \NN$ and 
$\sum_{k=1}^\infty \alpha_k < \infty$; 
\item[(v)] For all nonnegative integers $n$ the  polynomial 
$T[(1+z)^n]$ has only real zeros of the same sign.
\end{itemize}
\end{theorem}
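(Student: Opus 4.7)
The plan is to establish the cycle $(\text{i})\Rightarrow(\text{v})\Rightarrow(\text{iii})\Rightarrow(\text{iv})\Rightarrow(\text{ii})\Rightarrow(\text{i})$, combining the Hurwitz-type lemmas developed in the excerpt with the classical Hadamard factorization for the Laguerre--P\'olya class. Two of the five links are immediate: (i)\,$\Rightarrow$\,(v) because $(1+z)^n$ has only real zeros, and (ii)\,$\Rightarrow$\,(i) because every real-rooted polynomial lies in $\LP$.

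The implication (v)\,$\Rightarrow$\,(iii) is the natural entry point for Sz\'asz's estimate. Consider the scaled polynomials
\[
P_n(z):=T[(1+z/n)^n]=\sum_{k=0}^n \binom{n}{k}\frac{\lambda_k}{n^k}z^k,
\]
whose coefficient of $z^k$ converges to $\lambda_k/k!$ as $n\to\infty$. By hypothesis each $P_n$ has only real zeros of the same sign, and in particular is non-vanishing in the open upper half-plane and in one of the two open half-planes $\{\Re z>0\}$ or $\{\Re z<0\}$. Applying Lemma \ref{lagg} together with Remark \ref{S-pr} twice, once per half-plane, and passing to a common subsequence, yields local uniform convergence to $\Phi(z)$ with $\Phi$ simultaneously non-vanishing in both half-planes; so $\Phi$ is a uniform limit on compacta of polynomials with only real zeros of the same sign, as in (iii). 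The equivalence (iii)\,$\Leftrightarrow$\,(iv) is then the Hadamard--P\'olya factorization theorem for $\LP$ (applied, if needed, after the reflection $z\mapsto -z$), and I would simply invoke it from \cite[Ch.~VIII]{Le}.

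For (iv)\,$\Rightarrow$\,(ii), the plan is a factorization-and-limit argument. A function $\Phi$ of the form in (iv) is the local uniform limit of the truncations $\Phi_N(z)=Cz^M e^{az}\prod_{k=1}^N(1+\alpha_k z)$, each of which corresponds to a finite multiplier sequence $\{\lambda_k^{(N)}\}$. The corresponding diagonal operator $T_N$ decomposes into elementary moves that each preserve $\LP$: the scaling $P(z)\mapsto P(az)$ coming from $e^{az}$, the shift $P(z)\mapsto z^MP(z)$ coming from $z^M$, and the finite ``elementary symmetric'' multiplier sequence coming from $\prod_{k=1}^N(1+\alpha_kz)$. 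For the last of these, preservation of $\LP$ is reduced, via Lemma \ref{lghu} applied to the upper and right half-planes, to the non-vanishing of $\sum_{k=0}^m\binom{m}{k}\lambda_k^{(N)}z^k$ in those half-planes; this non-vanishing follows from the Grace--Walsh--Szeg\H{o} Theorem (Theorem \ref{GWS}) exactly as in the proof of Theorem \ref{GWS2}. A final application of Remark \ref{S-pr} transfers the conclusion from $T_N$ to $T$ acting on any $\phi\in\LP$.

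The main obstacle is the truncation step in (iv)\,$\Rightarrow$\,(ii): to ensure that the operator limit $T_N\phi\to T\phi$ happens inside $\LP\cup\{0\}$ rather than degenerating, one needs simultaneous half-plane non-vanishing bounds so that Sz\'asz's lemma applies uniformly in $N$. This double (upper-plus-right-half-plane) bookkeeping, needed to control both the reality \emph{and} the common sign of the zeros at every stage, is the main technical thread running through the proof.
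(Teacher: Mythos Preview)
The paper does not supply a proof of Theorem~\ref{ps}: it is quoted as the classical P\'olya--Schur theorem, with a pointer to \cite{PS} and \cite[Chapter~VIII]{Le}, and the only comment is that the implication ``multiplier sequences preserve $\LP$'' follows easily from Remark~\ref{S-pr} and Lemma~\ref{lghu}. So there is no in-paper argument to compare yours against.

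That said, your proposed route contains a genuine error in the step (iv)\,$\Rightarrow$\,(ii). You write that the diagonal operator $T_N$ attached to $\Phi_N(z)=Cz^Me^{az}\prod_{k=1}^N(1+\alpha_kz)$ ``decomposes into elementary moves'': $P\mapsto P(az)$ from $e^{az}$, $P\mapsto z^MP$ from $z^M$, and so on. This is false: a product factorization of the \emph{symbol} $\Phi$ does \emph{not} correspond to a composition of the diagonal operators. Composition of diagonal operators corresponds to \emph{term-wise} multiplication of the sequences $\{\lambda_k\}$, whereas multiplication of the symbols $\Phi$ corresponds to \emph{convolution} of the sequences $\{\lambda_k/k!\}$. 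Concretely, the factor $z^M$ in $\Phi$ does not give the shift $P\mapsto z^MP$; it gives the rank-one operator $P\mapsto M!\,a_Mz^M$. Likewise, a single linear factor $(1+\alpha z)$ in $\Phi$ yields the operator $P\mapsto a_0+\alpha a_1 z$, not an ``elementary symmetric'' multiplier. So the decomposition you describe simply does not compute $T_N$, and the subsequent appeal to Grace--Walsh--Szeg\H{o} is aimed at the wrong object.

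A workable repair is to reroute through (v): Lemma~\ref{lghu}, applied with $H$ the open upper half-plane and then with $H$ the open right (or left) half-plane, gives (iv)\,$\Leftrightarrow$\,(v) directly. The substantive implication is then (v)\,$\Rightarrow$\,(i), and this is exactly where Grace--Walsh--Szeg\H{o} enters in the classical argument (the Schur--Mal\'o composition): if $T[(1+z)^n]=\sum_k\binom{n}{k}\lambda_k z^k$ has only real zeros of the same sign and $P(z)=\sum_k a_kz^k$ has only real zeros with $\deg P\le n$, one polarizes $P$ and applies Theorem~\ref{GWS} to conclude that $T[P]=\sum_k\lambda_k a_k z^k$ has only real zeros. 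With (v)\,$\Rightarrow$\,(i) in hand, (i)\,$\Rightarrow$\,(ii) is the limiting step the paper alludes to (Remark~\ref{S-pr} and Lemma~\ref{lghu}), and your remaining links are fine.
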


\begin{proposition}\label{f1}
Let $r=0,1,2$ or $3$. Then $S_r(\PP) \subseteq \PP\cup \{0\}$. 
\end{proposition}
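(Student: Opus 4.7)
The plan is to apply Theorem \ref{trans-U}, reducing the problem to showing $\Phi_r(z):=S_r(e^z)\in \LP^+\cup\{0\}$ for each $r\in\{0,1,2,3\}$. The case $r=0$ is trivial since $S_0\equiv 0$, and $r=1$ was already handled in the course of proving Conjecture \ref{conMSS} (there $\Phi_1$ is essentially the Bessel-type series $\sum z^k/(k!(k+1)!)$, which is in $\LP^+$). For $r=2,3$ I would compute $\Phi_r$ directly and factor it as a multiplier-sequence transform of a simpler base series.

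A short simplification of $1/(k!)^2-1/((k+r)!(k-r)!)$ gives
\[
\Phi_r(z)=\sum_{k=0}^\infty \frac{P_r(k)}{k!\,(k+r)!}\,z^k,\qquad P_r(k):=(k+1)\cdots(k+r)-k(k-1)\cdots(k-r+1),
\]
with $P_2(k)=4k+2$ and $P_3(k)=9k^2+9k+6$. The structural observation is that $\Phi_r$ equals the image, under the diagonal operator $T_r(z^k)=P_r(k)z^k$, of the base series $f_r(z):=\sum_{k\ge 0} z^k/(k!(k+r)!)$. So it suffices to check (a) $f_r\in\LP^+$ and (b) $\{P_r(k)\}$ is a multiplier sequence with non-negative entries; granted these, $T_r(f_r)=\Phi_r\in\LP^+$, since multiplier sequences preserve $\LP$ (as noted after Theorem \ref{ps}) and non-negativity of the coefficients pins any real zeros to $\RR_{\le 0}$.

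For (a), one invokes the classical theory of generalized Laguerre polynomials: $L_n^{(r)}(z)$ has only real positive zeros for $r\in\NN$, so $n^{-r}L_n^{(r)}(-z/n)\in\PP_n$, and a routine Stirling asymptotic shows uniform convergence to $f_r$ on compact subsets of $\CC$. For (b) one appeals to condition (iv) of Theorem \ref{ps} applied to the auxiliary entire function $\Psi_r(z):=\sum_k P_r(k)z^k/k!$. Using $\sum k(k-1)z^k/k!=z^2e^z$ and $\sum k z^k/k!=ze^z$, one obtains $\Psi_2(z)=(4z+2)e^z$ and $\Psi_3(z)=3(3z^2+6z+2)e^z$. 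Each of $4z+2$ and $3z^2+6z+2$ has only non-positive real zeros (for the latter, the discriminant is $36-24=12>0$ and the roots are $-1\pm 1/\sqrt{3}<0$), so $\Psi_2,\Psi_3\in\LP^+$.

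The only genuinely delicate step is the $r=3$ verification, since $P_3(z)=9z^2+9z+6$ \emph{itself} has complex roots (discriminant $-135<0$): one cannot factor $\{P_3(k)\}$ as $c\prod(k+a_j)$ with $a_j\ge 0$, so the multiplier-sequence property must be extracted from the finer exponential factorization $\Psi_3(z)=3(3z^2+6z+2)e^z$, whose quadratic factor \emph{does} acquire a positive discriminant thanks to the cross term in $\sum k^2 z^k/k!=(z^2+z)e^z$. The same calculation for $r=4$ produces a cubic $\Psi_4(z)/e^z$ that has only one real root, which is presumably why the proposition is stated only up to $r=3$.
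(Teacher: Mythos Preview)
Your argument is correct and follows essentially the same route as the paper: reduce via Theorem~\ref{trans-U} to $S_r(e^z)\in\LP^+$, compute the coefficients as $P_r(k)/(k!(k+r)!)$ with $P_2(k)=4k+2$ and $P_3(k)=9k^2+9k+6$, and then verify that $\{P_r(k)\}$ is a multiplier sequence by computing $\sum_k P_r(k)z^k/k!$ and checking the polynomial factor in front of $e^z$ has only negative real zeros. The only cosmetic difference is in how you justify $f_r(z)=\sum_k z^k/(k!(k+r)!)\in\LP^+$: the paper simply cites that $\{1/\Gamma(k+\mu)\}_{k\ge 0}$ is a multiplier sequence (so $f_r$ is the image of $e^z$ under it), whereas you approximate $f_r$ directly by rescaled Laguerre polynomials $n^{-r}L_n^{(r)}(-z/n)$; both are standard and equivalent. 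One small inaccuracy: the case $r=1$ is not actually handled in the paper's proof of Conjecture~\ref{conMSS} (that proof uses Jacobi polynomials for a different identity), but the fact $\sum_k z^k/(k!(k+1)!)\in\LP^+$ is of course well known, so this does not affect correctness.
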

\begin{proof}
Fix $r \in \NN$, and let $S_r(e^z)=\sum_{k=0}^\infty a_{k, r}z^k$. Then 
$$
a_{k,r}= \frac 1 {k!(k+r)!}\Big( (k+1)\cdots (k+r)- k(k-1)\cdots (k-r+1)\Big). 
$$
In particular, 
$$
S_1(e^z)= \sum_{k =0}^\infty \frac 1 {k!(k+1)!}z^k.  
$$
For each $\mu >0$, the sequence $\{1/\Gamma(k+\mu)\}_{k=0}^\infty$ is a multiplier sequence, see \cite{CC3}. Thus $F_1(z) \in \LP^+$, by Theorem \ref{ps}. This verifies the case when $r=1$ by Theorem \ref{trans-U}.  

Next,  $a_{k,2}=(2+4k) / (k!(k+2)!)$.   
Both sequences $\{1/(k+2)!\}_{k=0}^\infty$ and $\{2+4k\}_{k=0}^\infty$ are multiplier sequences. Hence, so is $\{(2+4k)/(k+2)!\}_{k=0}^\infty$. The case $r=2$ now follows from Theorems \ref{ps} and \ref{trans-U}.  

Since $a_{k,3}=(6+9k+9k^2) / (k!(k+3)!)$, the case when $r=3$ follows from the fact that $\{6+9k+9k^2\}_{k=0}^\infty$ is a multiplier sequence. Indeed, 
$$
\sum_{k=0}^\infty \frac {6+9k+9k^2}{k!}z^k=(6+18z+9z^2)e^z,
$$
and the zeros of $6+18z+9z^2$ are negative. By Theorem \ref{ps}, $\{6+9k+9k^2\}_{k=0}^\infty$ is a multiplier sequence. 
\end{proof}

Similarly, let $S'_r = V_\alpha$, where $\alpha_0=1, \alpha_r=-1$, and $\alpha_i=0$ for all $i \notin \{0,r\}$.
\begin{proposition}
Let $r=0,1,2$ or $3$. Then $S'_r(\PP) \subseteq \PP\cup \{0\}$. 
\end{proposition}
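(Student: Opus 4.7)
The strategy parallels the proof of Proposition \ref{f1} exactly, only now invoking Theorem \ref{trans-V} in place of Theorem \ref{trans-U}. By Theorem \ref{trans-V}, it suffices to show that $V_\alpha(e^z)\in\LP^+\cup\{0\}$ for each $r\in\{0,1,2,3\}$. The case $r=0$ is degenerate (the two defining conditions for $\alpha_0$ collapse and $V_\alpha$ is identically zero), so the interesting work is for $r\in\{1,2,3\}$. The plan is to compute the coefficient of $z^k$ in $V_\alpha(e^z)$, factor it, and exhibit it as the value at $e^z$ of a diagonal operator whose coefficient sequence is a multiplier sequence in the sense of P\'olya--Schur (Theorem \ref{ps}).

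Using the explicit formula in Theorem \ref{trans-V}(ii), write $V_\alpha(e^z)=\sum_{k\geq 0}b_{k,r}'z^k$, where
\[
b_{k,r}'=\frac{1}{(k+1)!\,k!}-\frac{1}{(k+1+r)!\,(k-r)!}=\frac{(k+2)(k+3)\cdots(k+1+r)-k(k-1)\cdots(k-r+1)}{k!\,(k+1+r)!},
\]
with the second term taken to vanish when $k<r$. A short expansion of the numerator (a difference of two monic polynomials of the same degree in $k$) yields
\[
b_{k,1}'=\frac{2}{k!\,(k+2)!},\qquad b_{k,2}'=\frac{6(k+1)}{k!\,(k+3)!},\qquad b_{k,3}'=\frac{12(k^2+2k+2)}{k!\,(k+4)!}.
\]
Rewriting these as $V_\alpha(e^z)=\sum(\lambda_k/k!)z^k$ gives $\lambda_k=2/(k+2)!$, $\lambda_k=6(k+1)/(k+3)!$, and $\lambda_k=12(k^2+2k+2)/(k+4)!$ respectively.

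Each such $\lambda_k$ is a product of simpler multiplier sequences, and by a standard fact (implicit in Theorem \ref{ps}) the product of multiplier sequences is a multiplier sequence. Indeed, $\{1/(k+m)!\}_{k\geq 0}=\{1/\Gamma(k+m)\}_{k\geq 0}$ is a multiplier sequence for every positive integer $m$ (the Craven--Csordas fact already used for Proposition \ref{f1}), the sequence $\{k+1\}_{k\geq 0}$ is trivially a multiplier sequence, and $\{k^2+2k+2\}_{k\geq 0}$ is a multiplier sequence because
\[
\sum_{k=0}^\infty \frac{k^2+2k+2}{k!}z^k=(z^2+3z+2)e^z=(z+1)(z+2)e^z\in\LP,
\]
so the criterion of Theorem \ref{ps}(iii)/(iv) applies. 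Applying the resulting multiplier sequence to $e^z$ produces, by Theorem \ref{ps}(ii), a function in $\LP$; since all coefficients $b_{k,r}'$ are manifestly non-negative, this function lies in $\LP^+$, and the proof is complete via Theorem \ref{trans-V}.

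The only step with any algebraic content is the cubic cancellation that reduces the $r=3$ numerator to $12(k^2+2k+2)$ together with the recognition that $(z+1)(z+2)e^z$ is real-rooted; both are routine, so there is no substantial obstacle. The interest of the proposition, as with Proposition \ref{f1}, is the contrast with $r\geq 4$, where the analogous auxiliary polynomial no longer need have only real zeros and this elementary multiplier-sequence argument breaks down.
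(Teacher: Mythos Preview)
Your proof is correct and follows essentially the same approach as the paper: compute $b'_{k,r}$ explicitly, factor the numerator, and verify via Theorem~\ref{ps} that the resulting diagonal sequence is a multiplier sequence (the paper only writes out the case $r=3$ in detail but the argument is the same). One harmless slip: $\{1/(k+m)!\}=\{1/\Gamma(k+m+1)\}$, not $\{1/\Gamma(k+m)\}$, though both are multiplier sequences by the cited Craven--Csordas result, so the conclusion is unaffected.
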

\begin{proof}
Fix $r \in \NN$, and let $S'_r(e^z)=\sum_{k=0}^\infty b_{k, r}z^k$. Then 
$$
b_{k,r}= \frac 1 {k!(k+1+r)!}\Big( (k+2)\cdots (k+1+r)- k(k-1)\cdots (k-r+1)\Big). 
$$
The proof proceeds as the proof of Proposition \ref{f1}. For example, 
$$
b_{k,3} = \frac {12} {k!(k+4)!} (k^2+2k+2).
$$
The sequence $\{k^2+2k+2\}_{k=0}^\infty$ is a multiplier sequence since 
$$
\sum_{k=0}^\infty \frac {k^2+2k+2}{k!}z^k= (2+3z+z^2)e^z.
$$
\end{proof}
We conjecture that $S'_r(e^z) \in \LP^+$ for all $r \in \NN$. 

\section{Refined results on the location of zeros}
We provide here some general results on the effect on the zeros of polynomials under the transformations $T_\mu$, $U_\alpha$ and $V_\alpha$. 
\begin{theorem}\label{refined1}
Let $\mu$ be a sequence of complex numbers, and let $$P(z)= 1+a_1z+\cdots+a_nz^n=\prod_{j=1}^n(1+\rho_jz)$$ be a complex polynomial of degree $n$. Suppose that $K$ is a circular region containing no zeros of $P_{\mu,n}(z)$. We further require $K$ to be convex if $\mu_0=\gamma_0=0$.  
 If $\zeta$ is a non-zero complex number for which 
$$
\left\{\rho_i \zeta+ \frac 1 {\rho_i \zeta} : 1 \leq i \leq n \right\} \subset K,
$$
then $T_\mu(P)(\zeta) \neq 0$. 
\end{theorem}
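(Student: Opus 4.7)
The plan is to reduce the statement to an application of the Grace--Walsh--Szeg\H{o} Theorem via the symmetric function identity of Theorem \ref{gen-id}, mirroring the strategy used in the proof of Conjecture \ref{conMSS} and of Theorem \ref{GWS2}. First I would observe that if $\boldsymbol{\rho}=(\rho_1,\ldots,\rho_n)$, then the coefficients of $P(z)$ are $a_k=e_k(\boldsymbol{\rho})$, and the homogeneity of the elementary symmetric functions gives
\begin{equation*}
W_{\mu,n}(\rho_1\zeta,\ldots,\rho_n\zeta)=\sum_{i\leq j}\mu_{j-i}\,\zeta^{i+j}e_i(\boldsymbol{\rho})e_j(\boldsymbol{\rho})=T_\mu(P)(\zeta).
\end{equation*}
Thus, assuming for contradiction that $T_\mu(P)(\zeta)=0$, we obtain $W_{\mu,n}(\rho_1\zeta,\ldots,\rho_n\zeta)=0$.

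Next I would invoke Theorem \ref{gen-id} to expand this in the variables $w_i:=\rho_i\zeta+1/(\rho_i\zeta)$. Since $P$ has degree $n$, all $\rho_j$ are nonzero, and $\zeta\neq 0$ by hypothesis, so the prefactor $e_n(\rho_1\zeta,\ldots,\rho_n\zeta)=\zeta^n\rho_1\cdots\rho_n$ is nonzero. Consequently the identity forces
\begin{equation*}
Q(w_1,\ldots,w_n):=\sum_{k=0}^n\gamma_k\,e_{n-k}(w_1,\ldots,w_n)=0.
\end{equation*}
The polynomial $Q$ is multi-affine and symmetric in $n$ variables, and its total degree equals $n$ precisely when $\gamma_0\neq 0$; recalling from \eqref{gammas} that $\gamma_0=\mu_0$, we see that $\deg Q=n$ unless $\mu_0=\gamma_0=0$, in which case the hypothesis guarantees that $K$ is convex.

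With all points $w_1,\ldots,w_n\in K$ and either $K$ convex or $\deg Q=n$, the Grace--Walsh--Szeg\H{o} Theorem (Theorem \ref{GWS}) produces some $w\in K$ with $Q(w_1,\ldots,w_n)=Q(w,\ldots,w)$. Evaluating the right-hand side on the diagonal gives
\begin{equation*}
Q(w,\ldots,w)=\sum_{k=0}^n\gamma_k\binom{n}{n-k}w^{n-k}=P_{\mu,n}(w),
\end{equation*}
so $P_{\mu,n}(w)=0$ for some $w\in K$, contradicting the assumption that $K$ contains no zeros of $P_{\mu,n}$. The main subtlety is the bookkeeping around the degree condition in Grace--Walsh--Szeg\H{o}; this is exactly what dictates the extra convexity hypothesis in the degenerate case $\gamma_0=0$, and once this is unravelled the argument is purely mechanical given Theorem \ref{gen-id}.
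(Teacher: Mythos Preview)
Your proof is correct and follows essentially the same route as the paper: assume $T_\mu(P)(\zeta)=0$, rewrite via Theorem~\ref{gen-id} as a vanishing of $\sum_k\gamma_k e_{n-k}$ at the points $\rho_i\zeta+1/(\rho_i\zeta)\in K$, and then apply Grace--Walsh--Szeg\H{o} to land a zero of $P_{\mu,n}$ inside $K$. If anything, you are more explicit than the paper in tracking why $\gamma_0=\mu_0$ governs the degree of the auxiliary multi-affine polynomial and hence when the convexity hypothesis on $K$ is needed.
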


\begin{proof}
Let $\zeta$ be as in the statement of the theorem, and suppose that  $T_\mu(P)(\zeta) = 0$. Since 
$$
T_\mu(P)(\zeta) = W_{\mu,n}(\rho_1\zeta, \ldots, \rho_n \zeta)= a_n\zeta^n \sum_{k=0}^n \gamma_k e_{n-k}\left(\rho_1\zeta+\frac 1 {\rho_1\zeta}, \ldots, \rho_n\zeta+\frac 1 {\rho_n\zeta}\right), 
$$
there is, by Theorem \ref{GWS}, a $\xi \in K$ such that 
$$
0= \sum_{k=0}^n \gamma_k e_{n-k}\left(\xi, \ldots, \xi \right)= \sum_{k=0}^n \gamma_k \binom n k \xi^{n-k}=P_{\mu,n}(\xi), 
 $$
 which contradicts the assumptions on $K$. 
\end{proof}

 For $0<\theta<2\pi$, let
$S_\theta= \{ re^{i\phi}: |\pi-\phi|<\theta \mbox{ and } r>0\}$ be the sector centered on the negative real axis, and  that opens an angle $2\theta$. 
\begin{theorem}\label{refined2}
Let $\alpha=\{\alpha_k\}_{k=0}^\infty$ be a sequence of real number such that $$U_\alpha((1+z)^n) \in \PP_n.$$  Suppose that $P(z)= \sum_{k=0}^n a_k z^k$ has zeros only in $S_\theta$, where $0 \leq \theta <\pi/2$. Then $U_\alpha(P(z)) \equiv 0$, or all zeros of 
$U_\alpha(P(z))$ are in $S_{2\theta}$. 
\end{theorem}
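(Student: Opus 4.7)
The plan is to pass to the linearized operator $T_\mu$ where $\mu=\{\alpha_0,0,\alpha_1,0,\alpha_2,\ldots\}$, so that $U_\alpha(P)(z)$ equals $T_\mu(P)(\sqrt z\,)$ up to the substitution $z\mapsto z^2$ (namely $T_\mu(P)(w)=U_\alpha(P)(w^2)$, since all nonzero $\mu_k$ have even index). A zero $\zeta$ of $U_\alpha(P)$ thus corresponds, under either square root $w=\pm\sqrt\zeta$, to a zero of $T_\mu(P)$; the idea is to apply Theorem \ref{refined1} with $w$ and deduce a contradiction.

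First I would establish the following auxiliary fact: the polynomial $P_{\mu,n}$ has no zeros in either open half-plane $\{\Re z>0\}$ or $\{\Re z<0\}$. This uses the hypothesis $U_\alpha((1+z)^n)\in\PP_n$. Indeed, by the identity $W_{\mu,n}(z,\dots,z)=T_\mu((1+z)^n)=U_\alpha((1+z)^n)(z^2)$, the polynomial $W_{\mu,n}(z,\dots,z)$ vanishes only on the imaginary axis. Combined with the factorization $W_{\mu,n}(z,\dots,z)=z^nP_{\mu,n}(z+1/z)$ and the fact that the map $z\mapsto z+1/z$ sends each open half-plane $\{\pm\Re z>0\}$ onto itself, this yields the claim. (Equivalently, one uses Theorem \ref{GWS2} and the even-degree symmetry $W_{\mu,n}(-\zz)=W_{\mu,n}(\zz)$ forced by $\mu_{\text{odd}}=0$.)

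Next, by Hurwitz's theorem I may assume $P$ has degree exactly $n$ with $a_0\neq 0$ (since $0\notin S_\theta$) and write $P(z)=a_0\prod_{j=1}^n(1+\rho_jz)$ with $|\arg\rho_j|<\theta$. Suppose for contradiction that $U_\alpha(P)(\zeta)=0$ for some $\zeta\notin S_{2\theta}\cup\{0\}$, and pick $w$ with $w^2=\zeta$ and $\arg w\in[0,\pi)$. The condition $\zeta\notin S_{2\theta}$ translates to $|\arg w-\pi/2|\geq\theta$, so either $\arg w\in[0,\pi/2-\theta]$ or $\arg w\in[\pi/2+\theta,\pi)$. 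In the first case, $\arg(\rho_jw)\in(-\theta,\pi/2)\subset(-\pi/2,\pi/2)$ for every $j$, so both $\rho_jw$ and $1/(\rho_jw)$ lie in $K=\{\Re z>0\}$; in the second, they all lie in $K=\{\Re z<0\}$. By convexity, $\rho_jw+1/(\rho_jw)\in K$ for every $j$.

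Finally, since $K$ is a convex circular region disjoint from the zero set of $P_{\mu,n}$ (by the auxiliary fact), Theorem \ref{refined1} applied to $w$ yields $T_\mu(P)(w)\neq 0$, contradicting $U_\alpha(P)(\zeta)=T_\mu(P)(w)=0$. The only real obstacle in this argument is the auxiliary non-vanishing statement for $P_{\mu,n}$ in both half-planes (rather than just one), which requires observing that the even-index-support of $\mu$ forces $W_{\mu,n}$ to be even jointly in its arguments, so that weak Hurwitz stability automatically upgrades to non-vanishing in $\{\Re z<0\}$ as well; the remainder is a routine sector bookkeeping together with an application of the already-proved Theorem \ref{refined1}.
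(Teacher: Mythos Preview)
Your proof is correct and follows the same approach as the paper: pass to $T_\mu$ with $\mu=\{\alpha_0,0,\alpha_1,0,\ldots\}$, use the relation $T_\mu(P)(w)=U_\alpha(P)(w^2)$, and apply Theorem~\ref{refined1} with a half-plane for $K$.

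The only difference is your choice of square root. The paper picks the principal branch, so that $\zeta\notin S_{2\theta}$ forces $|\arg\sqrt\zeta|\le \pi/2-\theta$; then every $\rho_j\sqrt\zeta$ automatically lies in the right half-plane and a single application of Theorem~\ref{refined1} with $K=\{\Re z>0\}$ suffices. By choosing $\arg w\in[0,\pi)$ instead, you create the second case $\arg w\in[\pi/2+\theta,\pi)$ and must then argue that $P_{\mu,n}$ is also zero-free on $\{\Re z<0\}$. Your justification of that auxiliary fact via the parity $P_{\mu,n}(-z)=(-1)^nP_{\mu,n}(z)$ (equivalently $W_{\mu,n}(-\zz)=W_{\mu,n}(\zz)$ when $\mu$ is supported on even indices) is correct, but unnecessary: simply replacing $w$ by $-w$ in your second case reduces it to the first. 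So the paper's route is slightly more economical, but yours is not wrong.
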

\begin{proof}
Suppose that $P(z)= \sum_{k=0}^n a_k z^k$ has zeros only in $S_\theta$. Write $P(z)=C \prod_{j=1}^n (1+\rho_jz)$. Then $|\arg(\rho_j)|< \theta$ for all $1\leq j \leq n$. If $|\arg(\zeta)|< \pi/2-\theta$, then 
$$
\left\{\rho_j \zeta+ \frac 1 {\rho_j \zeta} : 1 \leq j \leq n \right\} \subset \{z  \in \CC : \Re(z)>0\}.
$$
By Theorem \ref{refined1}, $T_\mu(P)(\zeta) \neq 0$, where $\mu=\{\alpha_0,0,\alpha_1,0,\ldots\}$. Thus $U_\alpha(z)= T_\mu(\sqrt{z}) \neq 0$, whenever $|\arg(z)|< \pi-2\theta$. 
\end{proof}
The proof of the corresponding theorem for $V_\alpha$ is almost identical. 
\begin{theorem}\label{refined3}
Let $\alpha=\{\alpha_k\}_{k=0}^\infty$ be a sequence of real numbers such that $$V_\alpha((1+z)^n) \in \PP_n.$$  Suppose that $P(z)= \sum_{k=0}^n a_k z^k$ has zeros only in $S_\theta$, where $0 \leq \theta <\pi/2$. Then $V_\alpha(P(z)) \equiv 0$, or all zeros of 
$V_\alpha(P(z))$ are in $S_{2\theta}$. 
\end{theorem}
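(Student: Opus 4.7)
\textbf{Proof plan for Theorem \ref{refined3}.} The argument is an exact mirror of the proof of Theorem \ref{refined2}; only the parity of the auxiliary sequence $\mu$ changes. The plan is to set
$\mu=\{0,\alpha_0,0,\alpha_1,0,\alpha_2,\ldots\}$, so that $\mu_{2j+1}=\alpha_j$ and $\mu_{2j}=0$. A direct book-keeping against the definition \eqref{tmu} shows that $T_\mu(Q)$ contains only odd powers of $z$ for any $Q \in \CC[z]$, and in fact
$$
T_\mu(Q)(z) \;=\; z\cdot V_\alpha(Q)(z^2).
$$
Confirming this identity is the only non-trivial book-keeping step and is routine: pair a summand $\alpha_\ell a_i a_{i+2\ell+1} z^{2i+2\ell+1}$ of $T_\mu(Q)$ with the corresponding summand of $V_\alpha(Q)$ by setting $k=i+\ell$, $j=\ell$ in the definition of $c_k(\alpha)$.

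Next I would record that $P_{\mu,n}(z)$ is weakly Hurwitz stable. The hypothesis $V_\alpha((1+z)^n) \in \PP_n$ combined with Theorem \ref{alg-V} (applied to the present $\mu$) yields condition (iii) of that theorem; tracing through its proof shows this is equivalent to the weak Hurwitz stability of $P_{\mu,n}$. With this in hand, I take $K$ to be the open right half-plane, which is convex --- a necessary requirement in Theorem \ref{refined1} since here $\mu_0=\gamma_0=0$.

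Now factor $P(z)=C\prod_{j=1}^n(1+\rho_j z)$; the hypothesis that every zero of $P$ lies in $S_\theta$ amounts to $|\arg\rho_j|<\theta$ for all $j$. For any non-zero $\zeta$ with $|\arg \zeta|<\pi/2-\theta$, one gets $\Re(\rho_j\zeta)>0$, and since $w\mapsto w+1/w$ preserves the open right half-plane, $\rho_j\zeta+1/(\rho_j\zeta)\in K$ for every $j$. Theorem \ref{refined1} then yields $T_\mu(P)(\zeta)\neq 0$. Invoking $T_\mu(P)(\zeta)=\zeta\cdot V_\alpha(P)(\zeta^2)$ and $\zeta\neq 0$, this forces $V_\alpha(P)(\zeta^2)\neq 0$. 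As $\zeta$ sweeps out the sector $\{|\arg\zeta|<\pi/2-\theta\}$, the square $\zeta^2$ sweeps out $\{|\arg z|<\pi-2\theta\}$, which is precisely the open complement of $S_{2\theta}$ in $\CC\setminus\{0\}$. Hence $V_\alpha(P)\equiv 0$ or all its zeros lie in $S_{2\theta}$.

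There is no genuine obstacle here: the only two points that demand a momentary pause are the identity $T_\mu(P)(z)=z\,V_\alpha(P)(z^2)$ and the invocation of the convex clause of Theorem \ref{refined1}, both of which are transparent.
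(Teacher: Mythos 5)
Your proposal is correct and matches the paper's intended argument: the paper gives no separate proof of Theorem \ref{refined3}, stating only that it is ``almost identical'' to that of Theorem \ref{refined2}, and your write-up supplies exactly the needed modifications — the odd sequence $\mu=\{0,\alpha_0,0,\alpha_1,\ldots\}$, the identity $T_\mu(P)(z)=z\,V_\alpha(P)(z^2)$, and the observation that $\gamma_0=\mu_0=0$ forces the convexity clause of Theorem \ref{refined1}, which is satisfied since $K$ is the open right half-plane.
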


\section{Iterated Tur\'an inequalities and the Boros--Moll conjecture}\label{fin}
For Taylor coefficients of  functions in $\LP^+$, inequalities stronger than log-concavity hold. Namely the \emph{Tur\'an inequalities}: If $\sum_{k=0}^\infty \gamma_k z^k/k! \in \LP^+$, then 
the sequence $\{\gamma_k\}_{k=0}^\infty$ is log-concave. Craven and Csordas \cite{CC2} studied \emph{iterated Tur\'an inequalities}. Define a transformation, $\TT$, on infinite sequences as follows. If $\{\gamma_k\}_{k=0}^\infty$ is a sequence, let $\TT(\{\gamma_k\})=\{\mu_k\}_{k=0}^\infty$, where 
$\mu_k=\gamma_{k+1}^2-\gamma_k\gamma_{k+2}$. Note the shift of indices. Craven and Csordas stated the following problem.
\begin{problem}\label{it-turan}
Let  $\sum_{k=0}^\infty \gamma_k z^k/k! \in \LP^+$.  Is $\TT^i(\{\gamma_k\})$ a non-negative sequence for all 
$i \in \NN$?
\end{problem}
Craven and Csordas \cite{CC2} proved that $\TT^2(\{\gamma_k\})$ is non-negative if $\sum_{k=0}^\infty \gamma_k z^k/k! \in \LP^+$, and that $\TT^3(\{\gamma_k\})$ is non-negative if $\sum_{k=0}^\infty \gamma_k z^k/k! \in \LP^+$ and $\gamma_0=\gamma_1=0$. The second result can be stated as follows: If $\sum_{k=0}^\infty \gamma_k z^k/(k+2)! \in \LP^+$, then $\{\gamma_k\}_{k=0}^\infty$ is $3$-fold log-concave. In \cite{CC1} they posed the  following  problem. 
\begin{problem}\label{ccprob}
Characterize the sequences $\{\gamma_k\}_{k=0}^\infty$ such that 
$$
\sum_{k=0}^\infty \frac {\gamma_k}{k!}z^k \in \LP^+ \quad \mbox{ and } \quad  \sum_{k=0}^\infty \frac {t_k}{k!}z^k \in \LP^+ , 
$$
where $\{t_k\}_{k=0}^\infty=\TT(\{\gamma_k\}_{k=0}^\infty)$.
\end{problem}
Theorem \ref{trans-U} provides a large class of entire functions for which Problem \ref{ccprob} holds. 
\begin{proposition}
If $\sum_{k=0}^\infty\gamma_k z^k \in \LP^+$, then $\{\gamma_k\}_{k=0}^\infty$ satisfies both conditions in Problem \ref{ccprob}. 
\end{proposition}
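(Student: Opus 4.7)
The plan is to verify the two conditions in Problem \ref{ccprob} separately. For the first, the sequence $\{1/k!\}_{k=0}^\infty$ is the prototypical multiplier sequence (by Theorem \ref{ps}, since $\sum z^k/(k!)^2 = I_0(2\sqrt{z})$ has only real negative zeros, namely $-j_{0,n}^2/4$); applying it to $\sum \gamma_k z^k \in \LP^+$ preserves both membership in the Laguerre--P\'olya class and non-negativity of coefficients, so $\sum \gamma_k z^k/k! \in \LP^+$.

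For the second condition, I would take $\alpha=(1,-1,0,0,\ldots)$, so that $U_\alpha$ sends $\{a_k\}$ to $\{a_k^2-a_{k-1}a_{k+1}\}$. A short computation yields
$$
U_\alpha(e^z) = \sum_{k\geq 0}\frac{z^k}{(k+1)!\,k!},
$$
which belongs to $\LP^+$: the polynomials $U_\alpha((1+z/n)^n)$ are, up to a positive normalization, the rescaled Narayana polynomials from the Example following Theorem \ref{alg-U}, so they lie in $\PP_n$ for each $n$; their coefficients converge to those of the displayed Bessel-type function, and Remark \ref{S-pr} upgrades this to uniform convergence on compact subsets of $\CC$, placing the limit in $\LP^+$. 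Theorem \ref{trans-U} then gives $B(z):=U_\alpha\!\left(\sum \gamma_k z^k\right)=\sum b_k z^k\in \LP^+$, with $b_k=\gamma_k^2-\gamma_{k-1}\gamma_{k+1}$ (and $\gamma_{-1}=0$).

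The crucial observation is that $b_0=\gamma_0^2$ while $t_k=b_{k+1}$ for every $k\geq 0$. Hence, after applying the multiplier $\{1/k!\}$ to $B$ to obtain $\hat B(z)=\sum b_k z^k/k!\in \LP^+$, we have
$$
\sum_{k\geq 0}\frac{t_k}{k!}z^k \;=\; \sum_{k\geq 0}\frac{b_{k+1}}{k!}z^k \;=\; \hat B'(z).
$$
Since $\LP$ is closed under differentiation (Rolle's theorem on the real-rooted approximating polynomials, preserved by uniform convergence on compact subsets) and the Taylor coefficients $b_{k+1}/k!$ of $\hat B'$ remain non-negative, we conclude $\hat B'\in \LP^+$, as required.

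Every step reduces either to invoking a result already established in the paper (Theorems \ref{ps} and \ref{trans-U}) or to a standard closure property of $\LP$. The only genuinely substantive ingredient is verifying $U_\alpha(e^z)\in \LP^+$ for $\alpha=(1,-1,0,\ldots)$; this is essentially the Narayana-polynomial computation already carried out in the paper, now used in the limiting regime.
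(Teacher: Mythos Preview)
Your proof is correct and follows essentially the same strategy as the paper's: apply Theorem~\ref{trans-U} with $\alpha=(1,-1,0,\ldots)$ to return to $\LP^+$, then use a multiplier-type operation to extract $\sum t_k z^k/k!$. The only cosmetic difference is that the paper applies the single multiplier sequence $\{1/(k-1)!\}_{k\geq 0}$ (with $1/(-1)!:=0$) directly, whereas you apply $\{1/k!\}$ and then differentiate---an equivalent maneuver, since the multiplier $\{k/k!\}=\{1/(k-1)!\}$ is precisely $z\,d/dz$.
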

\begin{proof}
Since $\{1/k!\}_{k=0}^\infty$ is a multiplier sequence, Theorem \ref{ps} implies 
$\sum_{k=0}^\infty  {\gamma_k}z^k/{k!} \in \LP^+$. We claim that $\{1/(k-1)!\}_{k=0}^\infty$, where $1/(-1)!:=0$, is a multiplier sequence. Indeed 
$$
\sum_{k=0}^\infty \frac 1 {(k-1)!k!}z^k = z \sum_{k=0}^\infty \frac 1 {k!(k+1)!}z^k \in \LP^+. 
$$
By Theorem \ref{trans-U}, 
$$
\sum_{k=0}^\infty (\gamma_k^2-\gamma_{k-1}\gamma_{k+1})z^k \in \LP^+,
$$
and by Theorem \ref{ps}, 
$$
\sum_{k=0}^\infty \frac{\gamma_k^2-\gamma_{k-1}\gamma_{k+1}} {(k-1)!}z^k = z\sum_{k=0}^\infty \frac {t_k}{k!}z^k \ \in \LP^+. 
$$

\end{proof}

Let us describe the initial conjecture that motivated Boros and Moll to study infinitely log-concave sequences. 
For $\ell, m \in  \NN$ with $\ell \leq m$, let 
\begin{equation}\label{d-def}
d_\ell(m)= 2^{-2m}\sum_{k=\ell}^m2^k\binom {2m-2k}{m-k}\binom {m+k} m \binom k \ell.
\end{equation}
It is not trivial (at least without the use of computers) to prove that $d_\ell(m)$ is the $\ell$th Taylor coefficient of the polynomial, defined for $a>-1$, by 
$$
P_m(a)= \frac{2^{m+3/2}(a+1)^{m+1/2}}{\pi} \int_0^\infty \frac 1 {(x^4+2ax^2+1)^{m+1}}dx.
$$

Based on computer experiments, Boros and Moll made the following conjecture, see \cite{BM}. 
\begin{conjecture}\label{orig-con}
For each $m \in \NN$, the sequence $\{d_\ell(m)\}_{\ell=0}^m$ is infinitely log-concave. 
\end{conjecture}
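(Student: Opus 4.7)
Conjecture \ref{orig-con} remains open and the paper only proposes a new approach rather than a proof, so any plan can only sketch a line of attack. The most direct route --- show $D_m(z) := \sum_{\ell=0}^m d_\ell(m) z^\ell \in \PP$ and quote Conjecture \ref{conMSS} --- is a non-starter, since the Boros--Moll polynomials are known to have non-real zeros for most $m$. One must therefore leave $\PP$ and work instead in the Laguerre--P\'olya class $\LP^+$, where the iterated Tur\'an machinery of Section \ref{fin} operates.

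The concrete reduction I would pursue is to find an $\LP^+$-representative for $\{d_\ell(m)\}$: a sequence of positive weights $w_\ell$, coming from a multiplier sequence, such that
\[
\Phi_m(z) := \sum_{\ell=0}^m d_\ell(m)\, w_\ell\, z^\ell \in \LP^+.
\]
The proposition at the end of Section \ref{fin} shows that any such $\Phi_m \in \LP^+$ immediately yields both conditions of Problem \ref{ccprob} for the weighted sequence; by iterating, i.e.\ applying Theorem \ref{trans-U} with $\alpha = \{1,-1,0,\ldots\}$ followed by an appropriate multiplier sequence from Theorem \ref{ps} at each stage, one can in principle feed iterated log-concavity back to $\{d_\ell(m)\}$ itself. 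Concretely, if one takes $w_\ell = 1/(\ell+2)!$ and proves $\Phi_m \in \LP^+$, the Craven--Csordas result cited in Section \ref{fin} already delivers $3$-fold log-concavity of $\{d_\ell(m)\}_{\ell=0}^m$, and this is presumably the conjecture the author is proposing to state.

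To attack the $\LP^+$-membership of $\Phi_m$ I would exploit the two structural inputs available: (a) the explicit double sum \eqref{d-def}, which might express $\Phi_m(z)$ as a finite combination of evaluations of $(1+z)^N$ under the operators $U_\alpha$ or $T_\mu$ of Sections \ref{sym}--\ref{SGWS}, opening the door to Theorem \ref{alg-U} or Theorem \ref{GWS2}; and (b) the integral representation
\[
P_m(a) = \frac{2^{m+3/2}(a+1)^{m+1/2}}{\pi} \int_0^\infty \frac{dx}{(x^4 + 2ax^2 + 1)^{m+1}},
\]
which, after normalising the $\ell$-th Taylor coefficient by $1/(\ell+c)!$ and recognising a Laplace-type identity, might realise $\Phi_m$ as a composition of $\LP^+$-preserving operators acting on $e^z$ --- in the spirit of Theorem \ref{trans-U}(iii).

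The main obstacle is precisely this $\LP^+$-membership step. In the proof of Conjecture \ref{conMSS} in Section \ref{SGWS}, Grace--Walsh--Szeg\H{o} reduced zero-freeness to a specific classical family, the Jacobi polynomials $P_n^{(1,1)}$, whose real-rootedness is classical. No analogous classical family is visible in the Boros--Moll setting, and the alternating double sum in \eqref{d-def} frustrates naive positivity arguments. Even granting $\Phi_m \in \LP^+$ for one normalisation $w_\ell = 1/(\ell+c)!$, upgrading from a finite number of iterated log-concavity assertions to truly \emph{infinite} log-concavity would require reapplying the argument with the normalisation refreshed (and, because of the index shift in $\TT$, possibly altered) at each stage; whether a uniform bound on such shifts can be maintained across iterations is entirely unclear, and is the structural reason the author's proposal stops short of the full Boros--Moll conjecture.
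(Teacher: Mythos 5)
You are right that this statement is an open conjecture which the paper does not prove: Section \ref{fin} only proposes an approach, and your sketch coincides with it --- namely, conjecture that $\sum_\ell d_\ell(m) z^\ell/\ell!$ and $\sum_\ell d_\ell(m) z^\ell/(\ell+2)!$ have only real zeros (Conjectures \ref{fact0} and \ref{fact2}) and invoke the Craven--Csordas results on iterated Tur\'an inequalities to obtain $2$-fold and $3$-fold log-concavity. Your assessment of the obstacles (no visible classical real-rooted family behind \eqref{d-def}, and the need to refresh the normalisation at each iteration to go beyond finitely many folds) matches why the paper stops short of the full conjecture.
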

Kauers and Paule \cite{KP} were able to prove log-concavity of $\{d_\ell(m)\}_{\ell=0}^m$, using computer algebra. 
We make the following conjecture.
\begin{conjecture}\label{fact0}
For each $m \in \NN$, the polynomial 
$$
Q_m(z)= \sum_{\ell=0}^m \frac{d_\ell(m)}{\ell!}z^\ell
$$
has only real zeros. 
\end{conjecture}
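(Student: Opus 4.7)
The starting observation is that interchanging the order of summation in the definition of $d_\ell(m)$ yields the Laguerre polynomial expansion
\begin{equation*}
Q_m(z) \;=\; 2^{-2m}\sum_{k=0}^m 2^k \binom{2m-2k}{m-k}\binom{m+k}{m} L_k(-z),
\end{equation*}
using $L_k(-z)=\sum_{\ell=0}^k\binom{k}{\ell} z^\ell/\ell!$. This exhibits $Q_m$ as a positive combination of members of $\LP^+$; however, positive combinations of $\LP^+$ functions need not lie in $\LP^+$, so this alone is not conclusive. Note also that the naive reduction through the multiplier sequence $\{1/\ell!\}$ (which would realize $Q_m$ as the Borel transform of $P_m(z)=\sum_\ell d_\ell(m)z^\ell$) cannot succeed, because direct computation at $m=2$ shows that $P_m$ already carries a non-real pair of zeros. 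Any proof must therefore exploit a genuine cancellation.

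The plan is to derive a recurrence for $Q_m(z)$ in $m$ by Borel-transforming the Amdeberhan--Moll recurrence for $d_\ell(m)$ (the same recurrence exploited by Kauers--Paule in their proof of $2$-fold log-concavity). That coefficient recurrence has the schematic form
\begin{equation*}
(m+1)\,d_\ell(m+1) \;=\; \alpha(m,\ell)\,d_{\ell-1}(m)+\beta(m,\ell)\,d_\ell(m)+\gamma(m,\ell)\,d_{\ell+1}(m),
\end{equation*}
with $\alpha,\beta,\gamma$ linear in $\ell$. Weighting by $z^\ell/\ell!$ and summing converts the index shifts $d_{\ell\pm 1}$ and the factors of $\ell$ into $d/dz$, integration, and the Euler operator $z\,d/dz$, producing a first-order differential-integral equation
\begin{equation*}
(m+1)\,Q_{m+1}(z)\;=\; \mathcal{A}_m\, Q_m(z),
\end{equation*}
where $\mathcal{A}_m$ is a linear operator with polynomial coefficients in $z$. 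The aim is then to exhibit $\mathcal{A}_m$ as a composition of operations known to preserve $\LP^+$, such as translations, scalings, and Laguerre--P\'olya operators of the form $1+\lambda\, d/dz$ with $\lambda\in\RR$; real-rootedness would then propagate inductively from $Q_m$ to $Q_{m+1}$, with base cases $Q_0=1$ and $Q_1=3/2+z$.

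Should this operator-identity approach prove intractable, a complementary route is to Borel-transform the Boros--Moll integral representation
$P_m(a) = \frac{2^{m+3/2}(a+1)^{m+1/2}}{\pi}\int_0^\infty (x^4+2ax^2+1)^{-m-1}\,dx$
coefficient-by-coefficient in $a$, substituting $a^\ell\mapsto z^\ell/\ell!$; the resulting integrand should be expressible through Bessel or confluent hypergeometric functions with known zero distribution, reducing the problem to preservation of $\LP^+$ under a positive integral (such preservation not being automatic but being susceptible to case analysis when the integrand has interlacing zero structure). In either approach the principal obstacle is the same: the constituent operations do not obviously possess the signs required by a Favard-type interlacing argument or a direct application of Theorems \ref{trans-U} and \ref{trans-V}, so the genuine cancellation responsible for the real-rootedness of $Q_m$ (absent from $P_m$) must be located inside the underlying recurrence. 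I expect this to be the hardest step.
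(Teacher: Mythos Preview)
This statement is presented in the paper as Conjecture~\ref{fact0} and is \emph{not} proved there; the paper only records it as an open problem, together with the companion Conjecture~\ref{fact2}, and explains why either would yield partial progress on the Boros--Moll infinite log-concavity conjecture via the Craven--Csordas results on iterated Tur\'an inequalities. There is therefore no proof in the paper to compare your attempt against.

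Your proposal, for its part, is also not a proof but a research programme. You correctly identify that the naive multiplier-sequence reduction fails (since $P_m$ itself need not be real-rooted), and you outline two possible attacks: a recurrence-in-$m$ approach hoping to realise $Q_{m+1}$ as the image of $Q_m$ under an $\LP^+$-preserving operator $\mathcal{A}_m$, and an integral-representation approach. Both are reasonable heuristics, but the decisive step in each is left open by your own admission. In the first approach there is no a priori reason to expect the operator $\mathcal{A}_m$ arising from the three-term coefficient recurrence to factor as a composition of maps of the form $1+\lambda\,d/dz$ with real $\lambda$; such factorisations are exceptional, and a generic first-order operator with polynomial coefficients does not preserve real-rootedness. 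In the second approach, ``Borel-transforming coefficient-by-coefficient'' an integral in the parameter $a$ is not yet a well-defined operation, and the hope that the resulting integrand has an interlacing zero structure is purely speculative.

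In short: the paper offers no proof, and your proposal is an honest sketch of directions rather than an argument. The conjecture remains open in the paper, and nothing in your outline closes it.
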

We also make a stronger conjecture. 
\begin{conjecture}\label{fact2}
For each $m \in \NN$, the polynomial 
$$
R_m(z)= \sum_{\ell=0}^m \frac{d_\ell(m)}{(\ell+2)!}z^\ell
$$
has only real zeros. 
\end{conjecture}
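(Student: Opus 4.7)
The plan is to prove Conjecture \ref{fact2} by combining the three-term recurrence known for the Boros--Moll polynomials with an inductive interlacing argument, in the spirit of classical proofs of real-rootedness for orthogonal-polynomial-like families. Concretely, starting from a recursion for $P_m(a)=\sum_\ell d_\ell(m)a^\ell$ in the literature (such as those of Kauers--Paule), I would divide through by $(\ell+2)!$ to derive a recurrence of the form
$$R_m(z) = A_m(z)\, R_{m-1}(z) + B_m(z)\, R_{m-2}(z),$$
and then prove by induction that the real zeros of $R_m$ interlace those of $R_{m-1}$, using the standard sign test on $A_m, B_m$ evaluated at the zeros of $R_{m-1}$.

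A natural attempt would be to bootstrap from Conjecture \ref{fact0}, using the integral identity
$$R_m(z) = \int_0^1 (1-t)\, Q_m(tz)\, dt,$$
which follows from $\int_0^1 (1-t)t^\ell\, dt = 1/((\ell+1)(\ell+2))$. However, this averaging transform does \emph{not} preserve real-rootedness: for $Q(z)=(1+z)^2$ it returns $\tfrac{1}{12}z^2+\tfrac{1}{3}z+\tfrac{1}{2}$, whose discriminant equals $-\tfrac{1}{18}$. Equivalently, in the P\'olya--Schur formalism of this paper, the diagonal sequence $\{1/((\ell+1)(\ell+2))\}_{\ell\geq 0}$ is not a multiplier sequence for $\PP$: its exponential generating function $(e^z-1-z)/z^2$ has strictly positive Taylor coefficients but no real zeros at all, hence is not of the form $Ce^{az}\prod(1+\alpha_k z)$ required by Theorem \ref{ps}. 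Consequently Conjecture \ref{fact2} does not reduce to Conjecture \ref{fact0} by any linear preserver and must genuinely exploit the arithmetic of the coefficients $d_\ell(m)$.

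The hard part will be verifying the sign conditions on $A_m$ and $B_m$. The Kauers--Paule type recurrences for $d_\ell(m)$ intertwine the indices $\ell$ and $m$, and after division by $(\ell+2)!$ they generically introduce rational prefactors in $z$ into $A_m(z)$ and $B_m(z)$; careful regrouping and polynomial manipulation will be needed to extract a clean interlacing inequality. Motivated by the proof of Conjecture \ref{conMSS} --- which succeeded precisely by matching the relevant polynomial with a Jacobi polynomial $P_n^{(1,1)}$ --- I would in parallel search for a hypergeometric representation of $R_m(z)$ identifying it (possibly after an affine substitution) with a Jacobi polynomial $P_m^{(\alpha,\beta)}$ for suitable $\alpha,\beta$. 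Such an identification would yield real-rootedness together with explicit zero-localization in a bounded interval, and would furnish a structural explanation for why the conjecture should hold in the first place.
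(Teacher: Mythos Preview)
There is no proof in the paper to compare against: Conjecture~\ref{fact2} is stated as an \emph{open conjecture}, not a theorem. The paper proposes it (together with Conjecture~\ref{fact0}) as a new approach to the Boros--Moll problem and only records the elementary relation $Q_m(z)=(d^2/dz^2)(z^2R_m(z))$, which shows that Conjecture~\ref{fact2} implies Conjecture~\ref{fact0} via Rolle's theorem. No argument for the real-rootedness of $R_m$ is offered.

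Your proposal is therefore not a proof to be checked against the paper but a research plan for an open problem, and you are candid about this: you yourself flag that ``the hard part will be verifying the sign conditions on $A_m$ and $B_m$'' and that the Kauers--Paule recurrences mix the indices $\ell$ and $m$ in a way that makes a clean three-term recurrence for $R_m$ far from automatic. That is precisely the obstruction; until that step is carried out, nothing has been proved. Your observation that the diagonal sequence $\{1/((\ell+1)(\ell+2))\}$ is not a multiplier sequence is correct and pertinent --- it confirms, in the language of the paper, that Conjecture~\ref{fact2} cannot be deduced from Conjecture~\ref{fact0} by any P\'olya--Schur preserver, consistent with the paper's remark that \ref{fact2} is the stronger of the two. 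The parallel search for a Jacobi-type hypergeometric identification is a reasonable heuristic, but note that the success of that method for Conjecture~\ref{conMSS} relied on a very special coefficient pattern (Narayana numbers); there is no a priori reason the $d_\ell(m)/(\ell+2)!$ should admit such a closed form.
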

Note that $Q_m(z)= (d^2/dz^2)(z^2R_m(z))$, so Conjecture \ref{fact0} is stronger than  Conjecture \ref{fact2}.
The relevance of these conjectures stems from the results of Craven and Csordas on Problem \ref{it-turan}. If Conjecture \ref{fact0} is true, then $\{d_\ell(m)\}_{\ell=0}^m$ is $2$-fold log-concave. If Conjecture \ref{fact2} is true, then $\{d_\ell(m)\}_{\ell=0}^m$ is $3$-fold log-concave. \\[2ex]
{\bf Acknowledgments.} I thank the anonymous referee for carefully reading the paper.

\end{document}